\pgfplotsset{compat=newest}
\newtheorem{theorem}{Theorem}[section]
\newtheorem{lemma}[theorem]{Lemma}
\newtheorem{proposition}[theorem]{Proposition}
\newtheorem{definition}[theorem]{Definition}
\theoremstyle{definition}
\newtheorem{remark}[theorem]{Remark}
\newcommand{\PGU}{\mathsf{PGU}}
\newcommand{\PG}{\mathsf{PG}}
\newcommand{\GF}{\mathsf{GF}}
\newcommand{\AGL}{\mathsf{AGL}}
\DeclareMathOperator{\Persp}{Persp}
\newcommand{\BBT}{\textbf{BBT}\xspace}
\newcommand{\KRC}{\textbf{KRC}\xspace}
\newcommand{\KNP}{\textbf{KNP}\xspace}
\newcommand{\BB}{\textbf{BB}\xspace}
\title{On the geometry of full points of abstract unitals}
\author{D\'avid Mez\H{o}fi}
\address{Bolyai Institute \\
        University of Szeged \\
        Aradi v\'ertan\'uk tere 1\\
        H-6720 Szeged, Hungary}
\email{mezofi@math.u-szeged.hu}
\author{G\'abor P. Nagy}
\address{Department of Algebra \\
        Budapest University of Technology and Economics\\
        Egry J\'ozsef utca 1\\
        H-1111 Budapest, Hungary}
\address{Bolyai Institute \\
        University of Szeged \\
        Aradi v\'ertan\'uk tere 1\\
        H-6720 Szeged, Hungary}
\email{nagyg@math.bme.hu}
\thanks{Support provided from the National Research, Development and Innovation Fund of Hungary, financed under the 2018-1.2.1-NKP funding scheme, within the SETIT Project 2018-1.2.1-NKP-2018-00004. Partially supported by OTKA grants 119687 and 115288.}
\keywords{Abstract unital, projective embedding, perspectivities, full point}
\subjclass[2010]{51E20, 05B25}
\begin{document}

\begin{abstract}
The concept of full points of abstract unitals has been introduced by Korchm\'aros, Siciliano and Sz\H{o}nyi as a tool for the study of projective embeddings of abstract unitals. In this paper we give a more detailed description of the combinatorial and geometric structure of the sets of full points in abstract unitals of finite order.
\end{abstract}

\maketitle

\section{Introduction}

An abstract unital of order $n$ is a $2$-$(n^3+1,n+1,1)$ design. We say that an abstract unital $(X,B)$ is embedded in a projective plane $\Pi$ if $X$ consists of points of $\Pi$ and each block $b \in B$ has the form $X\cap \ell$ for some line $\ell$ of $\Pi$. For results on projective embeddings of abstract unitals see \cite{KSSz2018} and the references therein. 

Let $U=(X,B)$ be an abstract unital of order $n$ and fix two blocks $b_1,b_2$. Using the terminology of \cite{KSSz2018}, we say that $P$ is a \textit{full point with respect to} $(b_1,b_2)$ if $P\not\in b_1 \cup b_2$ and for each $Q\in b_1$, the block connecting $P$ and $Q$ intersects $b_2$. In other words, there is a well defined projection $\pi_{b_1,P,b_2}$ from $b_1$ to $b_2$ with center $P$. We denote by $F_U(b_1,b_2)$ the set of full points of $U$ w.r.t. the blocks $b_1,b_2$. Clearly, $F_U(b_1,b_2)=F_U(b_2,b_1)$. 

The structure of the paper is as follows. The main result of this paper is proved in Section 3. It shows that for any abstract unital of order $q$, which is projectively embedded in the Galois plane $\PG(2,q^2)$, the set of full points of two disjoint blocks are contained in a line. Moreover, the perspectivities of two disjoint blocks generate a semi-regular cyclic permutation group. In Section 4, we extend the results of \cite{KSSz2018} by giving a complete description on the structure of full points in the classical Hermitian unitals. Section 5 gives an overview of computational results about full points in abstract unitals of order $3$ and $4$, which belong to known classes \cite{BagchiBagchi1989,BBT03,KNP11,Kr06}. For the computation we developed and used the GAP package \texttt{UnitalSZ} \cite{MezNagy2018}.

\section{Combinatorial properties of the set of full points}

\subsection{Bounds on the number of full points}

We start with an easy observation on the number of full points of two blocks $b_1,b_2$ of $U$. The result seems to be rather weak. 

\begin{lemma}  \label{lem:generic_bounds}
Let $U=(X,B)$ be an abstract unital of order $n\geq 2$. Then
\[|F_U(b_1,b_2)|\leq 
\begin{cases}
n^2-n & \text{if $b_1,b_2$ have a point in common,} \\
n^2-1 & \text{if $b_1,b_2$ are disjoint.} 
\end{cases} \]
\end{lemma}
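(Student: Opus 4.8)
The plan is to run a single double count between full points and the blocks joining $b_1$ to $b_2$, treating the two cases by the same mechanism. First I would record the standard parameters: since $U$ is a $2$-$(n^3+1,n+1,1)$ design, every point lies on exactly $r=n^2$ blocks, and two distinct blocks meet in at most one point. Fix $P\notin b_1\cup b_2$. Because $P\notin b_1$, the $n+1$ joining blocks $\overline{PQ}$ with $Q\in b_1$ are pairwise distinct (two coinciding would force that block to contain two points of $b_1$, hence to equal $b_1$, contradicting $P\notin b_1$), and likewise for $b_2$. A block through $P$ meets $b_2$ in at most one point, and if it does it is one of the $n+1$ blocks $\overline{PQ'}$, $Q'\in b_2$; so exactly $n+1$ of the blocks through $P$ are secant to $b_2$. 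By definition $P$ is full precisely when all $n+1$ blocks $\overline{PQ}$, $Q\in b_1$, lie among these, i.e. each of them meets both $b_1$ and $b_2$. Calling a block that meets both $b_1$ and $b_2$ (in one point each) a \emph{transversal}, this shows every full point lies on at least $n+1$ transversals.

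For the disjoint case I would count the transversals directly. Each pair $(Q,Q')\in b_1\times b_2$ spans a unique block, and distinct pairs give distinct blocks (a coincidence would again force the block to equal $b_1$ or $b_2$, impossible since $b_1\cap b_2=\emptyset$), so there are exactly $(n+1)^2$ transversals; each meets $b_1\cup b_2$ in exactly two points and so carries $n-1$ interior points, the only candidates to be full. Writing $f=|F_U(b_1,b_2)|$ and double counting the incident pairs $(P,t)$ with $P$ a full point on a transversal $t$, the previous paragraph gives at least $f(n+1)$ such pairs, while summing over transversals gives at most $(n+1)^2(n-1)$. Dividing by $n+1$ yields $f\le n^2-1$.

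The common-point case $b_1\cap b_2=\{R\}$ needs only a small adjustment. Here the joining block $\overline{PR}$ automatically meets $b_2$ at $R$, so fullness of $P$ is equivalent to the remaining $n$ blocks $\overline{PQ}$, $Q\in b_1\setminus\{R\}$, being secant to $b_2$; each of these is a transversal avoiding $R$ (if it passed through $R$ it would contain two points of $b_1$ and equal $b_1$). The pairs in $(b_1\setminus\{R\})\times(b_2\setminus\{R\})$ yield exactly $n^2$ transversals not through $R$, each again with $n-1$ interior points, and the same double count gives $f\cdot n\le n^2(n-1)$, hence $f\le n^2-n$. The part that I expect to require the most care is precisely this bookkeeping: keeping the joining blocks pairwise distinct, verifying that transversals cannot secretly coincide with $b_1$ or $b_2$, and in the common-point case systematically discarding both the block $\overline{PR}$ and the transversals through $R$ so that the interior points being counted genuinely lie outside $b_1\cup b_2$. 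Once that is clean, both bounds drop out of the one double-counting identity.
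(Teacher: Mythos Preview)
Your double-counting argument is correct and gives the stated bounds. The paper, however, proceeds more directly: it fixes a single point $P\in b_1$, forms the set $S_P$ of points lying on some block from $P$ to a point of $b_2\setminus b_1$ (and outside $b_1\cup b_2$), observes that any full point $F$ must satisfy $FP\cap b_2\neq\emptyset$ and hence $F\in S_P$, and then simply computes $|S_P|=(n+1)(n-1)=n^2-1$ in the disjoint case and $|S_P|=n(n-1)=n^2-n$ in the common-point case. No averaging or double count is needed.

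What your approach buys is a clean, symmetric count via transversals that treats both cases uniformly; what the paper's approach buys is brevity and a strictly stronger conclusion, namely the containment $F_U(b_1,b_2)\subseteq S_P$ for \emph{every} $P\in b_1$, not just the cardinality bound. In effect, your double count is the sum over all $P\in b_1$ of the paper's single-$P$ containment, so the paper's argument is the ``one fibre'' version of yours.
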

\begin{proof}
For a fixed point $P\in b_1$ we define the set $S'_P$ as the union of the blocks connecting $P$ with $Q\in b_2\setminus b_1$, and the set $S_P=S'_P\setminus (b_1\cup b_2)$. Clearly, 
\[|S_P|=\begin{cases}
n^2-n & \text{if $b_1,b_2$ have a point in common,} \\
n^2-1 & \text{if $b_1,b_2$ are disjoint.} 
\end{cases}\]
As $F_U(b_1,b_2) \subseteq S_P$, the lemma follows. 
\end{proof}

In most (but not all) known examples of abstract unitals, the set of full points is contained in a block. This motivates the following definition. 

\begin{definition}
Let $U=(X,B)$ be an abstract unital and $b_1,b_2 \in B$ disjoint blocks. 
\begin{enumerate}[(i)]
\item The triple $\left( U, b_{1}, b_{2} \right)$ is \textit{full point regular} if the set of full points $F_{U}{\left( b_{1}, b_{2} \right)} \subseteq c$ for some block $c\in B$ such that $b_{1} \cap c = b_{2} \cap c = \emptyset$.
\item The abstract unital $U$ is \textit{full point regular} if for any two disjoint blocks $b_{1}, b_{2}$ the triple $\left( U, b_{1}, b_{2} \right)$ is full point regular.
\end{enumerate}
\end{definition}

%In the rest of this paper, we will study the structure of the set of full points w.r.t. two disjoint blocks. 

\subsection{Full points and perspectivities} By definition, any full point $P$ of the blocks $b_1,b_2$ defines a bijective map $\pi_{b_1,P,b_2}:b_1\to b_2$; we call it the \textit{perspectivity with center $P$.}

\begin{definition}
Let $b_1,b_2$ be blocks of the abstract unital $U$. Define the \textit{group of perspectivities of} $b_1$ as
\[\Persp_{b_2}(b_1) = \langle \pi_{b_1,P,b_2} \pi_{b_2,Q,b_1} \mid P,Q\in F_U(b_1,b_2)\rangle. \]
\end{definition}

It is easy to see that $\Persp_{b_2}(b_1)$ and $\Persp_{b_1}(b_2)$ are isomorphic permutation groups, the former acting on $b_1$ and the latter acting on $b_2$. For different full points $Q,R$, the perspectivities $\pi_{b_1,Q,b_2}$ and $\pi_{b_1,R,b_2}$ are different. This implies $|\Persp_{b_2}(b_1)| \geq |F_U(b_1,b_2)|$. In particular, $\Persp_{b_2}(b_1)$ is nontrivial if $|F_U(b_1,b_2)|> 1$. An important case will be when $\Persp_{b_2}(b_1)$ is a cyclic semi-regular permutation group on $b_1$.

%We notice that although $\Persp_{b_2}(b_1)$ depends only on the abstract unital structure of $(X,B)$, we may be able compute if more easily when a projective embedding of $(X,B)$ is given. 

\subsection{Dual $k$-nets in abstract unitals} We will present examples of abstract unitals when the set of full points w.r.t. the blocks $b_1,b_2$ form a third block $b_3$. More generally, we introduce the concept of an embedded dual $k$-net of an abstract unital. An abstract $k$-net is a structure consisting of a set $X$ of points and a set $B$ of blocks, which is partitioned into $k$ disjoint families $B_1,\ldots,B_k$ for which the following hold: (1) every point is incident with exactly one block of every $B_i$, $(i = 1,\ldots, k)$; (2) two lines of different families have exactly one point in common; (3) there exist $3$ lines belonging to $3$ different $B_i$, and which are not incident with the same point. See \cite{BarlottiStrambach,Belousov} as reference on abstract $k$-nets.

\begin{definition}
Let $U=(X,B)$ be an abstract unital of order $n$ and $k\geq 3$ and integer. We say that the blocks $b_1,\ldots,b_k$ form an \textit{embedded dual $k$-net} in $U$, if the following hold for all $1\leq i<j\leq k$:
\begin{enumerate}[(i)]
\item $b_i\cap b_j=\emptyset$.
\item For all $P\in b_i$, $Q\in b_j$, the block containing $P,Q$ intersects all $b_1,\ldots,b_k$ in a point. 
\end{enumerate}
\end{definition}

It is clear that for an embedded dual $k$-net $b_1,\ldots,b_k$ of $U$, $b_3\cup \cdots \cup b_k \subseteq F_U(b_1,b_2)$. The converse needs some explanation. 

\begin{lemma} \label{lem:dual_k_nets}
Let $U$ be an abstract unital of order $n$, $k\geq 3$ an integer and $b_1,\ldots,b_k$ blocks of $U$.
\begin{enumerate}[(i)]
\item If $b_3\subseteq F_U(b_1,b_2)$, then $b_1$ and $b_2$ are disjoint. 
\item If $b_3\subseteq F_U(b_1,b_2)$, then $b_1\subseteq F_U(b_2,b_3)$ and $b_2\subseteq F_U(b_1,b_3)$.
\item If $b_3\cup b_4\subseteq F_U(b_1,b_2)$, then $b_3$ and $b_4$ are disjoint. 
\item The blocks $b_1,\ldots,b_k$ form an embedded dual $k$-net if and only if $b_3\cup \cdots \cup b_k \subseteq F_U(b_1,b_2)$. 
\end{enumerate}
\end{lemma}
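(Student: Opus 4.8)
The plan is to prove the four items in sequence, using each earlier one as a stepping stone, and then assemble item (iv) from the definition of an embedded dual $k$-net together with items (i)--(iii).

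For item (i), I would argue by contradiction. Suppose $b_1\cap b_2=\{R\}$ for some point $R$. Pick any point $P\in b_3$; since $b_3\subseteq F_U(b_1,b_2)$, $P$ is a full point, so in particular $P\notin b_1\cup b_2$ and the perspectivity $\pi_{b_1,P,b_2}$ is a well-defined bijection $b_1\to b_2$. The obstruction is the common point $R$: the block through $P$ and $R$ meets both $b_1$ and $b_2$ in $R$ itself, so the image of $R$ under the projection is forced to be $R$. Counting then gives a contradiction, because the $n$ remaining points of $b_1\setminus\{R\}$ must map bijectively onto the $n$ remaining points of $b_2\setminus\{R\}$, but the block joining $P$ to a point $Q\in b_1\setminus\{R\}$ already contains $R$ if and only if $Q,P,R$ are collinear, and one checks this collinearity cannot hold for a full point, so the projection of some $Q\in b_1\setminus\{R\}$ is forced to land on $R$, violating injectivity. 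I expect this to be the most delicate counting step.

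For item (ii), fix $P\in b_1$; I must show $P\in F_U(b_2,b_3)$, i.e. that for every $Q\in b_2$ the block joining $P$ and $Q$ meets $b_3$. First, by item (i) the blocks $b_1,b_2$ are disjoint, so $P\notin b_2$, and since $b_3\subseteq F_U(b_1,b_2)$ is disjoint from $b_1$ (full points avoid $b_1\cup b_2$), we have $P\notin b_3$; thus $P$ is a candidate full point of $(b_2,b_3)$. Now take $Q\in b_2$ and let $\ell$ be the block through $P,Q$. Since $b_3$ consists of full points of $(b_1,b_2)$ and $P\in b_1$, each point $T\in b_3$ determines via the perspectivity a bijection $b_1\to b_2$; I would run this family of perspectivities to produce, for the given $Q\in b_2$, a point of $b_3$ lying on $\ell$. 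Concretely, the perspectivity $\pi_{b_1,T,b_2}$ sends $P$ to some point of $b_2$, and as $T$ ranges over $b_3$ these images cover $b_2$; selecting the $T$ whose image is $Q$ places $T$ on the block $PQ$, which is exactly what is needed. The symmetric statement $b_2\subseteq F_U(b_1,b_3)$ follows by interchanging the roles of $b_1$ and $b_2$, using $F_U(b_1,b_2)=F_U(b_2,b_1)$.

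For item (iii), assume $b_3\cup b_4\subseteq F_U(b_1,b_2)$ and suppose $b_3\cap b_4\ni S$. By item (ii) applied to both $b_3$ and $b_4$, we have $b_1\subseteq F_U(b_2,b_3)$ and $b_1\subseteq F_U(b_2,b_4)$; the point $S$ then lies simultaneously in $b_3$ and $b_4$ while being a full point target, and I would derive a contradiction by showing that the two perspectivities $\pi_{b_1,S,b_2}$ coming from viewing $S$ as a point of $b_3$ versus $b_4$ must coincide yet the geometry forbids $S\in b_3\cap b_4$ for a full point of the original pair, invoking item (i) with the roles permuted. Finally, for item (iv), the forward direction is the remark already recorded in the text, namely $b_3\cup\cdots\cup b_k\subseteq F_U(b_1,b_2)$; for the converse I would apply items (i)--(iii) repeatedly to all triples and pairs among $b_1,\dots,b_k$ to verify conditions (i) and (ii) of the embedded dual $k$-net definition, the pairwise disjointness coming from items (i) and (iii) and the intersection condition (ii) coming from item (ii) together with symmetry. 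The main obstacle throughout is the careful bookkeeping of which block a given perspectivity sends a point to; once the projection maps are tracked precisely, each item reduces to a counting or collinearity argument.
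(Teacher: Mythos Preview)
Your argument for (ii) is essentially the paper's, and your plan for (iv) is workable, but there are genuine gaps in (i) and (iii).

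For (i), your argument uses only a \emph{single} full point $P\in b_3$ and tries to extract a contradiction from the one perspectivity $\pi_{b_1,P,b_2}$. But there is none: you yourself observe that $R\mapsto R$ and that for $Q\in b_1\setminus\{R\}$ the block $PQ$ cannot contain $R$ (since $Q,R\in b_1$ and $P\notin b_1$). Hence $\pi_{b_1,P,b_2}$ restricts to a perfectly good bijection $b_1\setminus\{R\}\to b_2\setminus\{R\}$, and your final line (``the projection of some $Q$ is forced to land on $R$'') contradicts what you just showed. The missing idea is to use the whole block $b_3$: fix instead $P\in b_1\setminus\{Z\}$ (where $Z=b_1\cap b_2$) and let the $n+1$ centers $R\in b_3$ project $P$ into $b_2\setminus\{Z\}$, which has only $n$ points; pigeonhole forces two distinct $R_1,R_2\in b_3$ with the same image, whence $R_1,R_2,P$ lie on one block, so $P\in b_3$, contradicting $b_3\cap b_1=\emptyset$.

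For (iii), your proposed contradiction is illusory: the perspectivity $\pi_{b_1,S,b_2}$ depends only on the point $S$, not on whether you regard $S$ as lying in $b_3$ or in $b_4$, so ``the two perspectivities \ldots must coincide'' is automatic and yields nothing. The paper's route is to first show $b_1\subseteq F_U(b_3,b_4)$: given $P\in b_1$ and $Q\in b_3$, the block $PQ$ meets $b_2$ (since $Q\in F_U(b_1,b_2)$) in some $R$, and then $R\in b_2\subseteq F_U(b_1,b_4)$ (by (ii) applied to $b_4$) forces the same block $PR=PQ$ to meet $b_4$. Once $b_1\subseteq F_U(b_3,b_4)$, item (i) with the roles of $(b_1,b_2,b_3)$ replaced by $(b_3,b_4,b_1)$ gives $b_3\cap b_4=\emptyset$. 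Your phrase ``invoking item (i) with the roles permuted'' is the right endpoint, but you need this intermediate inclusion to get there.
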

\begin{proof}
(i) Assume that $\{Z\} = b_1\cap b_2$ and $b_3\subseteq F_U(b_1,b_2)$. Clearly, $b_3$ is disjoint from $b_1\cup b_2$. Fix an arbitrary point $P\in b_1\setminus\{Z\}$. Each point $R\in b_3$ projects $P$ to $b_2\setminus\{Z\}$. Hence, there are points $R_1,R_2 \in b_3$ such that $\pi_{b_1,R_1,b_2}(P)=\pi_{b_1,R_2,b_2}(P)$. This means that $P\in b_3$, a contradiction. (ii)  For any $P_1\in b_1$, $P_3\in b_3$, the block $P_1P_3$ intesects $b_2$. Now fix $P_1$ and let $P_3$ run through $b_3$ in order to obtain the bijection $\pi_{b_3,P_1,b_2}$. Thus, $P_1 \in F_U(b_2,b_3)$. Since this holds for all $P_1\in b_1$, the claim follows. For (iii) it suffices to show $b_1\subseteq F_U(b_3,b_4)$. Take $P \in b_1$, $Q\in b_3$ arbitrary points. From $Q$, $P$ projects to $R\in b_2$ and using $b_2\subseteq F_U(b_1,b_4)$, $P$ projects to $S\in b_4$ from $R$. Hence, $Q$ projects to $b_4$ from $P$. 

The ``only if'' part of (iv) follows from the definition. Assume now $b_3\cup \cdots \cup b_k \subseteq F_U(b_1,b_2)$. By (i) and (iii), all blocks $b_1,\ldots,b_k$ are disjoint. For the indices $3\leq i<j\leq k$,  there is an injective map $\alpha \mid b_1 \times b_2 \rightarrow b_i \times b_j$ mapping $\left( P_1,
P_2 \right) \mapsto \left( P_i, P_j \right)$ with collinear quadruple $P_1,P_2,P_i,P_j$. Moreover $\alpha$ is bijective, hence any pair of points $\left( P_i, P_j \right) \in b_i \times b_j$ determines a block $b'$ of $U$ such that $b'\cap b_i=P_i$, $i=1,2$. The block joining $P_1$ and $P_2$ intersects any block $b_s \subseteq F_{U}{\left( b_1, b_2 \right)}$ in $P_s$ for $3 \leq s \le k$, therefore $b_1,\ldots,b_k$ form a dual $k$-net in $U$.\end{proof}

\subsection{Bounds on dual $k$-nets in abstract unitals} For embedded dual $k$-nets, the trivial bound is $k\leq n+1$. With some elementary counting, we can improve this to $k\leq n-1$. This implies that an abstract unital of order $3$ has no embedded dual $3$-nets. 

\begin{proposition} \label{prop:dual_k_nets}
Let $U$ be an abstract unital of order $n\geq 3$. 
\begin{enumerate}[(i)]
\item If $U$ has an embedded dual $k$-net $\{b_1,\ldots,b_k\}$, then $k\leq n-1$. 
\item For two blocks $b_1,b_2$, $F_U(b_1,b_2)$ cannot contain more than $n-3$ blocks. 
\end{enumerate}
\end{proposition}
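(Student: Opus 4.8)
The plan is to prove (i) by a double count centred on the points lying outside the net, and then to deduce (ii) from (i) via Lemma~\ref{lem:dual_k_nets}. Write $Y=b_1\cup\cdots\cup b_k$ for the point set of the net; by the dual $k$-net property the $b_i$ are pairwise disjoint, so $|Y|=k(n+1)$, and the connecting (transversal) blocks are in bijection with $b_1\times b_2$, hence number $(n+1)^2$. The key structural observation is a trichotomy: a block distinct from every $b_i$ meets $Y$ in $0$, $1$, or $k$ points, since as soon as it contains points of two distinct groups, condition (ii) of the definition forces it to meet all $k$ of them. In particular every block through a point $W\notin Y$ meets $Y$ in $0$, $1$, or $k$ points.

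First I would fix an external point $W\notin Y$ (one exists because $|Y|\le(n+1)^2<n^3+1$) and let $\tau(W)$ be the number of transversals through $W$. Distinct transversals through $W$ meet only at $W$, so they cover $k\tau(W)$ distinct points of $Y$; each of the remaining $k(n+1)-k\tau(W)$ points of $Y$ is joined to $W$ by a block meeting $Y$ in that single point. Hence the number of blocks through $W$ meeting $Y$ equals $k(n+1)-(k-1)\tau(W)$, and as $W$ lies on exactly $n^2$ blocks,
\[
k(n+1)-(k-1)\tau(W)\le n^2. \tag{$\star$}
\]
Because one external point may carry up to $n+1$ transversals, $(\star)$ is too weak pointwise, and I would instead average it. Counting incidences between transversals and external points gives $\sum_{W\notin Y}\tau(W)=(n+1)^2(n+1-k)$ (each transversal contributes its $n+1-k$ points outside $Y$), and there are $N:=n^3+1-k(n+1)$ external points, so averaging $(\star)$ yields the single inequality
\[
N\bigl(k(n+1)-n^2\bigr)\le(k-1)(n+1)^2(n+1-k). \tag{$\star\star$}
\]

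It remains to show $(\star\star)$ fails once $k\ge n$. The trivial bound $k\le n+1$ is immediate, since a transversal $t$ meets the $k$ groups in distinct points, so $n+1=|t|\ge k$; thus only $k=n$ and $k=n+1$ must be excluded. For $k=n+1$ the right-hand side of $(\star\star)$ vanishes while the left-hand side is positive for $n\ge3$. For $k=n$ one computes $N=(n+1)(n-1)^2$, and after cancelling $(n-1)(n+1)$ the inequality $(\star\star)$ reduces to $n^2-2n-1\le0$, which fails for $n\ge3$. This proves $k\le n-1$, giving (i). Part (ii) is then formal: if $F_U(b_1,b_2)$ contained blocks $b_3,\dots,b_{m+2}$, then by Lemma~\ref{lem:dual_k_nets}(iv) the blocks $b_1,\dots,b_{m+2}$ form an embedded dual $(m+2)$-net, whence $m+2\le n-1$ by (i), i.e.\ $m\le n-3$.

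The main obstacle is the step from the clean pointwise inequality $(\star)$ to a global bound strong enough to reach $n-1$ rather than merely $n$. A single external point can sit on many transversals, so the exact transversal count must be injected through the averaging step, and the resulting inequality $(\star\star)$ is delicate: it excludes $k=n$ only because $n^2-2n-1>0$ for $n\ge3$. Care is therefore needed to confirm both boundary cases $k=n$ and $k=n+1$ rather than stopping at the weaker estimate $k\le n$.
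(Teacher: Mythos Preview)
Your proof of (i) is correct, but it follows a different counting route from the paper. Both arguments rest on the same structural fact: a block other than one of the $b_i$ meets $Y=b_1\cup\cdots\cup b_k$ in $0$, $1$, or $k$ points, and there are exactly $(n+1)^2$ transversals ($k$-secants). From there, the paper simply counts blocks globally by secant type: through any $P\in b_i$ there are $n+1$ transversals and the block $b_i$ itself, so the remaining $n^2-n-2$ blocks through $P$ are $1$-secants; summing over $Y$ gives $k(n+1)(n^2-n-2)$ one-secants in all. Bounding $(n+1)$-secants${}+{}k$-secants${}+{}1$-secants by the total number of blocks $n^2(n^2-n+1)$ and plugging in $k\ge n$ yields $n^3-3n^2+n+1\le 0$, which already fails at $n=3$. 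Your argument instead pivots to external points, obtains the pointwise bound $(\star)$, and then averages over all $W\notin Y$ using the exact transversal incidence count to reach $(\star\star)$; this is sound, and the boundary checks at $k=n$ and $k=n+1$ are correct, but the averaging step is additional machinery compared to the paper's single direct inequality. Your derivation of (ii) from (i) via Lemma~\ref{lem:dual_k_nets}(iv) is exactly the paper's argument.
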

\begin{proof}
(i) Let us assume that $k > n - 1$ and let $\mathcal{P} = b_1 \cup b_2 \cup
\ldots \cup b_k$. Any block of $U$ intersects $\mathcal{P}$ in $0$, $1$, $k$ or
$n+1$ points, the latter being the blocks $b_i$ themselves. W.l.o.g. consider the
disjoint blocks $b_1, b_2$. Any pair of points chosen from $b_1$ and $b_2$
determines the unique block in $B$ which is a $k$-secant to $\mathcal{P}$,
therefore the number of $k$-secants is $\left( n + 1 \right)^{2}$. Then, fix an
arbitrary block $b_{i}$ of the dual $k$-net and a point $P$ on the block
$b_{i}$. The number of $1$-secant blocks on $P$ is $n^{2} - n - 2$. Thus the
number $1$-secant blocks to $\mathcal{P}$ is $k\left( n + 1 \right)\left( n^{2}
- n - 2 \right)$. Since $\left| B \right| = n^{2}\left( n^{2} - n + 1 \right)$
we have
\begin{align*}
    k + \left( n + 1 \right)^{2} + k\left( n + 1 \right)\left( n^{2} - n - 2
    \right) &\le n^{2}\left( n^{2} - n + 1 \right),
%    n\left( n^{3} - 3n - 1 \right) \le k\left( n^{3} - 3n - 1 \right) &\le n^4
%    - n^{3} - 2n - 1\\
%    n^{3} - 3n^{2} + n + 1 &\le 0
\end{align*}
which gives $n^{3} - 3n^{2} + n + 1 \le 0$ by $k\geq n\geq 3$, a contradiction. 

(ii) If $F_U(b_1,b_2)$ contains the $k-2$ blocks $b_3,\ldots,b_k$, then $\{b_1,\ldots,b_k\}$ is an embedded dual $k$-net in $U$ by Lemma \ref{lem:dual_k_nets}(iv). Hence, $k-2\leq n-3$ by (i). 
\end{proof}

\subsection{Embedded dual $3$-nets and latin squares}

An embedded dual $3$-net $\{b_1,b_2,b_3\}$ determines a latin square $L$ of order $n+1$ in the following way. Label the points of $b_1, b_2, b_3$ by the set $\{1,\ldots,n+1\}$: 
\[b_s=\{P_{s,1}, \ldots,P_{s,n+1}\}, \qquad s=1,2,3. \] 
For $i,j\in \{1,\ldots,n+1\}$, let $c$ be the block connecting $P_{1,i}$ and $P_{2,j}$. Define $s$ by $\{P_{3,s}\} = b_3\cap c$ and write $s$ in row $i$ and column $j$ of $L$. Choosing a different labeling for $b_1,b_2,b_3$ results in an \textit{isotope} latin square. By reordering the three blocks, one gets \textit{conjugate} or \textit{parastrophe} latin squares. The set of all parastrophes of a latin square $L$ is also called the \textit{main class} of $L$. Latin squares are naturally related to (the multiplication tables of) finite \textit{quasigroups.} See \cite[Section 1.4]{DenesKeedwell} for more details and further references on conjugacy and parastrophy of latin squares. 

A property which, for each class $C$, either holds for all members of $C$ or for no member of $C$ is said to be a \textit{class invariant.} Properties of the underlying (dual) $3$-nets are \textit{main class invariants} of the corresponding coordinate latin square. In particular, the groups of perspectivities can be defined for (dual) $3$-nets and they are useful examples of \textit{main class invariants.} In the primal setting, perspectivities of 3-nets have been presented in \cite{BarlottiStrambach} and \cite{Belousov}. 

Let $L$ be a latin square of order $n$. We say that $L$ is group-based if it is a parastrophe to the Cayley table of a group $G$ of order $n$. As the group $G$ only depends on the main class of $L$, the following concept is well-defined.

\begin{definition}
Let $\mathcal{B}=\{b_1,b_2,b_3\}$ be an embedded dual $3$-net of the abstract unital $U$. We say that $\mathcal{B}$ is cyclic, if the corresponding latin square is a parastrophe of the Cayley table of the cyclic group of order $n+1$, where $n$ is the order of $U$.
\end{definition}

\begin{proposition} \label{prop:cyclic_dual_nets}
Let $U$ be an abstract unital of order $n$ and $\mathcal{B}=\{b_1,b_2,b_3\}$ be an embedded dual $3$-net of $U$. The following are equivalent:
\begin{enumerate}[(i)]
\item $\mathcal{B}$ is cyclic.
\item $\Persp_{b_i}(b_j)$ is the cyclic group of order $n+1$ for all $1\leq i,j\leq 3$, $i\neq j$. 
\end{enumerate}
\end{proposition}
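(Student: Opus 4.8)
The plan is to translate the perspectivities into the coordinatizing quasigroup and then to use the main class invariance already noted in the text to compute $\Persp$ on a convenient representative. First I would coordinatize as in Section~2.5: label $b_s=\{P_{s,1},\dots,P_{s,n+1}\}$ and let $*$ be the quasigroup with $P_{1,i},P_{2,j},P_{3,k}$ collinear iff $i*j=k$, so that $L(i,j)=i*j$. The perspectivities relevant to the dual $3$-net are those with centre on the third block; a perspectivity $\pi_{b_1,P_{3,k},b_2}$ sends the index $i\in b_1$ to the index $j\in b_2$ with $i*j=k$, and composing it with $\pi_{b_2,P_{3,k'},b_1}$ gives the map $i\mapsto k'/(i\backslash k)$ on $b_1$ (using division in $*$). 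Two elementary facts I would record here are: for $k\neq k'$ this map has no fixed point, since $g(i)=i$ forces $i*(i\backslash k)=k'$ while the left-hand side equals $k$; and the generated group acts transitively on $b_1$, since for $i\neq i'$ one may pick any $k$ and set $k'=i'*(i\backslash k)$. Thus $\Persp_{b_2}(b_1)$ is the projectivity group of $b_1$ through $b_2$ with centres on $b_3$. Because $\Persp_{b_i}(b_j)\cong\Persp_{b_j}(b_i)$ and the group of perspectivities is a main class invariant, all six groups appearing in (ii) are isomorphic to a single group $\mathcal{P}$; hence (ii) is equivalent to $\mathcal{P}\cong\mathbb{Z}_{n+1}$, and it suffices to analyse one pair.

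For the direction (i)$\Rightarrow$(ii) I would invoke main class invariance to replace $L$ by the Cayley table of $\mathbb{Z}_{n+1}$, that is $i*j=i+j$. Then $i\backslash k=k-i$ and $k'/j=k'-j$, so the product computed above becomes the translation $i\mapsto i+(k'-k)$. As $k,k'$ range over $\mathbb{Z}_{n+1}$ the differences $k'-k$ run through all of $\mathbb{Z}_{n+1}$, so these products generate exactly the regular translation group, cyclic of order $n+1$; hence $\mathcal{P}\cong\mathbb{Z}_{n+1}$ and (ii) holds. The same computation carried out for an arbitrary group $G$ yields the left regular representation of $G$, which is the reason the coordinate group is recovered intact as $\mathcal{P}$ in the converse.

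The converse (ii)$\Rightarrow$(i) is where the real work lies. Assuming $\mathcal{P}=\Persp_{b_2}(b_1)\cong\mathbb{Z}_{n+1}$, transitivity from the first paragraph together with $|\mathcal{P}|=n+1=|b_1|$ forces $\mathcal{P}$ to act regularly on $b_1$. The main obstacle is to deduce from regularity of this projectivity group that the coordinate quasigroup is isotopic to a group: this is the Reidemeister-type closure statement for $3$-nets, and I would establish it by fixing a base point $O_1\in b_1$ to identify $b_1$ with $\mathcal{P}$ via $g\mapsto g(O_1)$, transporting this identification to $b_2$ and $b_3$ through a fixed perspectivity, and then checking that the collinearity relation $i*j=k$ turns into the Cayley-table equation of $\mathcal{P}$. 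Once $L$ is group-based with coordinate group $\cong\mathcal{P}\cong\mathbb{Z}_{n+1}$, it is by definition a parastrophe of the Cayley table of the cyclic group of order $n+1$, so $\mathcal{B}$ is cyclic.

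I expect the associativity/closure step to be the only nonroutine ingredient; everything else is the bookkeeping of the three identifications together with the isomorphisms $\Persp_{b_i}(b_j)\cong\Persp_{b_j}(b_i)$ and the main class invariance established earlier in the text. A clean way to organize the hard step is to prove the self-contained lemma that the projectivity group of a line of an embedded dual $3$-net acts regularly if and only if the coordinate quasigroup is isotopic to a group, in which case the group is isomorphic to that projectivity group; the proposition then follows by reading off cyclicity on both sides.
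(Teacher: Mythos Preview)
Your plan is correct and hits the same two facts the paper uses; the paper's own proof is simply two citations to Barlotti--Strambach, namely \cite[Theorem~6.1]{BarlottiStrambach} for (i)$\Rightarrow$(ii) and \cite[Proposition~1.2]{BarlottiStrambach} for (ii)$\Rightarrow$(i). Your explicit computation that in the cyclic case the perspectivity products become the translations $i\mapsto i+(k'-k)$ is exactly what the first citation unpacks to, so that direction is complete and arguably clearer than a bare reference. For the converse, the lemma you isolate---regularity of the projectivity group on one line forces the coordinate quasigroup to be isotopic to a group, isomorphic to that projectivity group---\emph{is} Proposition~1.2 of Barlotti--Strambach; your sketch of its proof (identify $b_1$ with $\mathcal{P}$ via the regular action, transport labels through a fixed perspectivity, read off the Cayley table) is the standard argument and goes through, but the paper is content to cite rather than reprove it. So the two approaches coincide in substance: you make the easy direction self-contained and correctly identify the classical result needed for the hard one. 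One small point worth recording: by definition $\Persp_{b_2}(b_1)$ uses centres in all of $F_U(b_1,b_2)$, which in principle may properly contain $b_3$; your transitivity observation shows the $b_3$-centred subgroup already has order at least $n+1$, so under hypothesis~(ii) it coincides with $\Persp_{b_2}(b_1)$ and your identification with the $3$-net projectivity group is justified.
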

\begin{proof}
Let $L$ be the latin square associated to $\mathcal{B}$. By \cite[Proposition 1.2]{BarlottiStrambach}, (ii) implies that the rows of $L$ are elements of the cyclic group of order $n$, hence $L$ is cyclic and (i) holds. Conversely, assume that $\mathcal{B}$ is labeled in such a way that the the coordinate latin square $L$ is the Cayley table of the cyclic group. Then \cite[Theorem 6.1]{BarlottiStrambach} implies (ii). 
\end{proof}

\section{Full point regularity of embedded unitals} \label{sec:fpr}

The questions on the embeddings of abstract unitals in projective planes are long studied problems, with special focus on the embeddings of abstract unitals of order $q$ in the desarguesian plane $\PG(2,q^2)$. Korchm\'aros, Siciliano and Sz\H{o}nyi \cite{KSSz2018} developed the method of full points for the embedding problem. The main tool is the group of perspectivities of unital blocks. We notice that while the permutation group $\Persp_{b_2}(b_1)$ depends only on the abstract unital structure of $U=(X,B)$, we may be able compute it more easily when a projective embedding of $U$ is given.

Although the definition of the group of perspectivities works for intersecting blocks $b_1,b_2$, in the sequel, we will only deal with the case when $b_1,b_2$ are disjoint. The next definition gives a stronger version of the full point regular property, using the structure of the group of perspectivities. 

\begin{definition}
Let $U=(X,B)$ be an abstract unital and $b_1,b_2 \in B$ disjoint blocks. 
\begin{enumerate}[(i)]
\item If $\left( U, b_{1}, b_{2} \right)$ is a full point regular triple and $\Persp_{b_2}(b_1)$ is a cyclic semi-regular permutation group of $b_1$, then $\left( U, b_{1}, b_{2} \right)$ is said to be \textit{strongly full point regular.}
\item The abstract unital $U$ is \textit{strongly full point regular} if for any two disjoint blocks $b_{1}, b_{2}$ the triple $\left( U, b_{1}, b_{2} \right)$ is strongly full point regular.
\end{enumerate}
\end{definition}

Notice that $U$ is strongly full point regular if it has no full points at all. The next two lemmas deal with elementary properties of the groups of affinities of projective lines in $\PG(2,q^2)$, where $q$ is a power of the prime $p$. 

\begin{lemma} \label{lem:agl1-prop}
Let $p$ be a prime. 
\begin{enumerate}[(i)]
\item Let $g$ be an element of the affine linear group $\AGL(1,p^f)$ such that $o(g) \mid p^f-1$. Then $g$ has a unique fixed point $v\in \mathbb{F}_{p^f}$ and permutes $\mathbb{F}_{p^f}$ in orbits of length $o(g)$. 
\item Let $S$ be a subgroup of $\AGL(1,p^f)$ such that $p\nmid |S|$. Then, $S$ is cyclic and $|S|$ divides $p^f-1$. Moreover, $S$ has a unique fixed point in $\mathbb{F}_{p^f}$. \qed
\end{enumerate}
\end{lemma}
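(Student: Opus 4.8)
The plan is to prove the two parts of Lemma~\ref{lem:agl1-prop} using the standard structure of the affine group $\AGL(1,p^f)$, whose elements are maps $x\mapsto ax+b$ with $a\in\mathbb{F}_{p^f}^\times$ and $b\in\mathbb{F}_{p^f}$. I will keep track of elements by the pair $(a,b)$, so that the multiplicative part $a$ (the linear coefficient) governs the order modulo the translation subgroup.

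For part (i), first I would write $g\colon x\mapsto ax+b$. The key observation is that $g$ has a fixed point in $\mathbb{F}_{p^f}$ precisely when $a\neq 1$, since $ax+b=x$ is solvable iff $a-1$ is invertible, in which case the fixed point $v=b/(1-a)$ is unique. So I need to rule out $a=1$. If $a=1$, then $g$ is a nontrivial translation (nontrivial because $o(g)\mid p^f-1$ forces $o(g)$ coprime to $p$, yet a translation has order $p$ or is the identity; being coprime to $p$ it would be the identity, contradicting that we are describing a genuine element, or more carefully: $o(g)\mid p^f-1$ together with $a=1$ and $b\neq 0$ gives $o(g)=p$, which does not divide $p^f-1$). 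Hence $a\neq 1$ and the unique fixed point $v$ exists. After conjugating by the translation sending $v$ to $0$ (an inner automorphism that preserves cycle structure), $g$ becomes the linear map $x\mapsto ax$; on the nonzero elements this is multiplication by $a$, and every orbit of $x\mapsto ax$ on $\mathbb{F}_{p^f}\setminus\{0\}$ has length equal to the multiplicative order of $a$, which equals $o(g)$. This gives the orbit-length claim, with $v$ the sole fixed point.

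For part (ii), the hypothesis $p\nmid|S|$ means $S$ is a $p'$-subgroup. I would consider the homomorphism $\lambda\colon\AGL(1,p^f)\to\mathbb{F}_{p^f}^\times$ sending $(a,b)\mapsto a$, whose kernel is the translation subgroup $T$, an elementary abelian $p$-group of order $p^f$. The intersection $S\cap T$ is a $p'$-group inside a $p$-group, hence trivial, so $\lambda$ restricts to an injective homomorphism on $S$. Therefore $S$ embeds in the cyclic group $\mathbb{F}_{p^f}^\times$, which proves that $S$ is cyclic and that $|S|$ divides $|\mathbb{F}_{p^f}^\times|=p^f-1$. For the unique fixed point, since $S$ is cyclic I would take a generator $g$; as $|S|$ divides $p^f-1$ we have $o(g)\mid p^f-1$, so part (i) applies and $g$ has a unique fixed point $v$. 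Every element of $S=\langle g\rangle$ fixes $v$, and no point outside the orbit structure can be fixed by all of $S$ because $g$ alone fixes only $v$; thus $v$ is the unique common fixed point of $S$.

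I expect the only genuine subtlety to be the case analysis on the linear coefficient $a$ in part (i): one must argue carefully that the divisibility condition $o(g)\mid p^f-1$ excludes $a=1$, since the existence and uniqueness of the fixed point hinges entirely on $a\neq 1$. Everything else reduces to the reduction to the linear (fixed-point-centered) form and the elementary fact that $\mathbb{F}_{p^f}^\times$ is cyclic, both of which are routine.
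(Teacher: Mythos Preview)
Your proof is correct and follows the standard route. Note that the paper itself does not give a proof of this lemma: it is stated with a terminal \qed, meaning the authors regard it as elementary and omit the argument entirely. So there is nothing in the paper to compare against beyond the bare statement.

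One small remark: both the lemma as stated and your write-up tacitly exclude the identity element (for (i)) and the trivial subgroup (for (ii)), since the identity fixes every point rather than a unique one. You handle this implicitly when ruling out $a=1$, but you might make it explicit in a final version that the claim is meant for $g\neq 1$ and $S\neq\{1\}$; the authors clearly intend this reading as well. Apart from that, your argument---computing the fixed point of $x\mapsto ax+b$ for $a\neq 1$, conjugating to the linear form $x\mapsto ax$, and using the projection $\AGL(1,p^f)\to\mathbb{F}_{p^f}^\times$ with kernel the translation $p$-group---is exactly the natural one and is complete.
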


\begin{lemma}\label{lem:persp-elm}
  Let $\ell_1,\ell_2$ be two lines of $\PG(2,q^2)$ and $P,Q$ be two points off
  $\ell_1 \cup \ell_2$. Write $Z=\ell_1 \cap \ell_2$ and $V_i=\ell_i\cap PQ$,
  $i=1,2$. The perspectivity $\pi_{\ell_1,P,\ell_2} \pi_{\ell_2,Q,\ell_1}$
  fixes $Z$ and $V_1$ and permutes $\ell_1\setminus \{Z,V_1\}$ in orbits of
  equal lengths. 
\end{lemma}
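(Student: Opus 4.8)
The plan is to identify the composite map $g:=\pi_{\ell_1,P,\ell_2}\,\pi_{\ell_2,Q,\ell_1}$ as a projectivity of the line $\ell_1\cong\PG(1,q^2)$. Each perspectivity between two lines of the Desarguesian plane $\PG(2,q^2)$ is a projectivity, being a central projection realized by a fractional linear map over $\mathbb{F}_{q^2}$, so the composite lies in $\mathsf{PGL}(2,q^2)$ acting on $\ell_1$. This structural fact is what transfers the problem from synthetic incidence geometry to the orbit theory of linear groups of the line, and in particular lets me invoke Lemma~\ref{lem:agl1-prop}.

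First I would pin down the two fixed points by a short incidence chase, with no coordinates. For $Z$: the line $PZ$ meets $\ell_2$ in $Z$ itself (as $Z\in\ell_2$), so $\pi_{\ell_1,P,\ell_2}(Z)=Z$, and likewise $\pi_{\ell_2,Q,\ell_1}(Z)=Z$; hence $g(Z)=Z$. For $V_1$: since $V_1\in PQ$ and $V_1\neq P$, the line $PV_1$ is exactly $PQ$, which meets $\ell_2$ in $V_2$, so $\pi_{\ell_1,P,\ell_2}(V_1)=V_2$; symmetrically $QV_2=PQ$ meets $\ell_1$ in $V_1$, so $\pi_{\ell_2,Q,\ell_1}(V_2)=V_1$ and $g(V_1)=V_1$. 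Thus $g$ fixes both $Z$ and $V_1$.

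Next I would reduce to the affine group. Since $g$ fixes $Z$, choosing coordinates on $\ell_1$ with $Z$ at infinity presents $\ell_1\setminus\{Z\}$ as the affine line $\mathbb{F}_{q^2}$ and realizes $g$ as an element of $\AGL(1,q^2)$. When $Z\neq V_1$, either $g$ is the identity and the claim is trivial, or $V_1$ is a finite fixed point of this affine map, forcing its linear part to be nontrivial and hence $o(g)\mid q^2-1$; Lemma~\ref{lem:agl1-prop}(i) then gives that $V_1$ is the unique fixed point of $g$ in $\mathbb{F}_{q^2}$ and that $g$ permutes the remaining points in orbits all of length $o(g)$, which is exactly the assertion for $\ell_1\setminus\{Z,V_1\}$.

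The one case needing separate care, and the main subtlety, is $Z=V_1$, i.e.\ when $P,Q,Z$ are collinear, where Lemma~\ref{lem:agl1-prop}(i) no longer applies because $o(g)$ may be divisible by $p$. Here I expect to show that $g$ is forced to be parabolic: the two fixed points collapse to a single double fixed point at $Z$, so in the affine model $g$ is a (possibly trivial) translation of $\mathbb{F}_{q^2}$, whose nontrivial orbits are the additive cosets of a one-dimensional $\mathbb{F}_p$-subspace and therefore all have length $p$. Either way the orbits on $\ell_1\setminus\{Z,V_1\}$ have equal length, completing the argument. Confirming that the collinear configuration yields a parabolic rather than a split element is the step I would treat most carefully; a brief coordinate computation of the composite fractional linear map, or a trace-versus-determinant argument on the associated $2\times2$ matrix, settles it.
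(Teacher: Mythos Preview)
Your argument is correct. The paper's own proof of this lemma consists of the single word ``Elementary'', so there is no detailed approach to compare against; you have simply supplied the details the authors omit, and your route---identify $g$ as a projectivity of $\ell_1$, verify the two fixed points by an incidence chase, then split into the hyperbolic case $Z\neq V_1$ (where Lemma~\ref{lem:agl1-prop}(i) applies) and the parabolic case $Z=V_1$ (where $g$ becomes a translation)---is exactly the elementary argument one would expect. Your explicit treatment of the degenerate case $Z=V_1$ is more than the paper's later applications actually require, but it is correct and makes the lemma hold as stated without hidden hypotheses.
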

\begin{proof}
Elementary. 
\end{proof}

Let $S$ be any set of $n+1$ points in the projective plane $\Pi$ of order $n$. A \textit{nucleus} of $S$ is a point $P$ such that each line of $\Pi$ through $P$ intersects $S$ in a unique point. It follows that $P\not\in S$. We denote by $\mathcal{N}(S)$ the set of all nuclei of $S$. 

Let $U = \left( X, B \right)$ be a unital of order $q$ embedded in
$\PG{\left( 2, q^{2} \right)}$ and let $b_1, b_2 \in B$ be two (not
necessarily disjoint) blocks of $U$. Denote the lines containing the blocks
$b_1$ and $b_2$ by $\ell_1$ and $\ell_2$ respectively. Using the notations in
\cite{KM94} let $\mathcal{B} = b_1 \cup \left( \ell_2 \setminus b_2 \right)$:
the set $\mathcal{B}$ consists of $q^{2} + 1$ non collinear points, it is
contained in the union of the lines $\ell_1$ and $\ell_2$. Note that $Z =
\ell_1 \cap \ell_2$ belongs to $\mathcal{B}$. Let $\mathcal{N}{\left(
\mathcal{B} \right)}$ denote the set of all nuclei of $\mathcal{B}$. Clearly,
if $P$ is a full point w.r.t. to the blocks $b_1, b_2$ then $P$ is a nucleus
of $\mathcal{B}$, hence $F_{U}{\left( b_1, b_2 \right)} \subseteq
\mathcal{N}{\left( \mathcal{B} \right)}$.

The next lemma formulates \cite[Propositions~2 and~3]{KM94} in our setting.

\begin{lemma}
  Let $U = \left( X, B \right)$ be a unital of order $q$ embedded in
  $\PG{\left( 2, q^{2} \right)}$ and let $b_1, b_2 \in B$ be two blocks of $U$.
  Denote the lines containing the blocks $b_1$ and $b_2$ by $\ell_1$ and
  $\ell_2$ respectively. Write $Z=\ell_1 \cap \ell_2$ and $\mathcal{B} = b_1
  \cup \left( \ell_2 \setminus b_2 \right)$. Define the set $\Gamma_1 =
  \left\{ \pi_{\ell_1,P,\ell_2} \pi_{\ell_2,Q,\ell_1} \mid P,Q \in
  \mathcal{N}{\left( \mathcal{B} \right)} \right\}$ where $\mathcal{N}{\left(
  \mathcal{B} \right)}$ denotes the set of all nuclei of $\mathcal{B}$. Then
  the following hold:
  \begin{enumerate}[(i)]
    \item $\Gamma_1$ leaves $b_1$ invariant.
    \item $\Gamma_1$ is a group of affinities of the affine line $\ell_1
      \setminus \left\{ Z \right\}$. \qed
  \end{enumerate}
\end{lemma}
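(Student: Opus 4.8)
The plan is to reduce everything to the single set of perspectivities $\mathcal{F}=\{\pi_{\ell_1,P,\ell_2}\mid P\in\mathcal{N}(\mathcal{B})\}$ and to show that, after fixing one reference nucleus, $\mathcal{F}$ is a coset of the affine stabiliser of $b_1$. First I would record that every nucleus $P$ lies off $\ell_1\cup\ell_2$: the line $\ell_1$ itself meets $\mathcal{B}$ in the $q+1\ge 3$ points of $b_1$ and $\ell_2$ meets it in the $q^2-q\ge 2$ points of $\ell_2\setminus b_2$, so no point of either line can be a nucleus. Hence for $P\in\mathcal{N}(\mathcal{B})$ the perspectivity $f_P:=\pi_{\ell_1,P,\ell_2}\colon\ell_1\to\ell_2$ is a genuine bijection fixing $Z$, and $\pi_{\ell_2,Q,\ell_1}=f_Q^{-1}$.

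The first key step is the characterisation: a point $P$ off $\ell_1\cup\ell_2$ is a nucleus of $\mathcal{B}$ if and only if $f_P(b_1)=b_2$. To see this I would run over the lines through $P$. The line $PZ$ meets $\mathcal{B}$ only in $Z$, while every other line through $P$ meets $\ell_1\cup\ell_2$ exactly in a pair $R\in\ell_1\setminus\{Z\}$ and $S=f_P(R)\in\ell_2\setminus\{Z\}$; since $\mathcal{B}\subseteq\ell_1\cup\ell_2$, such a line carries exactly $[R\in\mathcal{B}]+[S\in\mathcal{B}]$ points of $\mathcal{B}$. Using $\mathcal{B}\cap(\ell_1\setminus\{Z\})=b_1$ and $\mathcal{B}\cap(\ell_2\setminus\{Z\})=(\ell_2\setminus b_2)\setminus\{Z\}$, the nucleus condition becomes the exclusive alternative $R\in b_1\Leftrightarrow f_P(R)\in b_2$; as $f_P$ is a bijection of equal-size sets this is precisely $f_P(b_1)=b_2$. (The case $Z\in b_1\cap b_2$ is handled identically after separating off the fixed point $Z$.) In particular part (i) is immediate: if $P,Q\in\mathcal{N}(\mathcal{B})$ then $f_P(b_1)=f_Q(b_1)=b_2$, hence $\pi_{\ell_1,P,\ell_2}\pi_{\ell_2,Q,\ell_1}=f_Q^{-1}f_P$ maps $b_1$ onto $b_1$.

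For part (ii), each element of $\Gamma_1$ fixes $Z$ by Lemma~\ref{lem:persp-elm}, so it restricts to an element of $\AGL(1,q^2)$ on the affine line $\ell_1\setminus\{Z\}$; it remains to prove that the set $\Gamma_1$ is closed. I would fix one nucleus $P_0$ and set $H=\{g\in\AGL(1,q^2)\mid g(b_1)=b_1\}$, the stabiliser of $b_1$ inside the affinities of $\ell_1\setminus\{Z\}$. For any nucleus $P$ the map $h_P:=f_{P_0}^{-1}f_P$ is a projectivity of $\ell_1$ fixing $Z$ with $h_P(b_1)=f_{P_0}^{-1}(b_2)=b_1$, so $h_P\in H$. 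Conversely, for $h\in H$ the composite $f_{P_0}h\colon\ell_1\to\ell_2$ is a projectivity between two distinct lines fixing their common point $Z$; by the classical fact that such a projectivity is a perspectivity, $f_{P_0}h=f_{P'}$ for some centre $P'$ off $\ell_1\cup\ell_2$, and since $f_{P'}(b_1)=f_{P_0}(b_1)=b_2$ the characterisation shows $P'\in\mathcal{N}(\mathcal{B})$. Thus $\{h_P\mid P\in\mathcal{N}(\mathcal{B})\}=H$, and a typical element of $\Gamma_1$ equals $f_Q^{-1}f_P=h_Q^{-1}h_P\in H$, every element of $H$ being attained by taking $Q=P_0$. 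Hence $\Gamma_1=H$ is a group of affinities of $\ell_1\setminus\{Z\}$.

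The main obstacle is the surjectivity in the last step, i.e. realising each affinity $h\in H$ by an honest nucleus: this is exactly where the classical theorem that a projectivity between distinct lines fixing their intersection must be a perspectivity does the work, turning the abstract affine map $f_{P_0}h$ back into a perspectivity with a concrete centre that the characterisation then certifies as a nucleus. A secondary point to dispatch is the degenerate case $\mathcal{N}(\mathcal{B})=\emptyset$, where $\Gamma_1$ is empty and the statement is vacuous; otherwise the argument above identifies $\Gamma_1$ with the full setwise stabiliser of $b_1$ in $\AGL(1,q^2)$.
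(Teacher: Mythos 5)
Your proof is correct, but it takes a genuinely different route from the paper for the simple reason that the paper contains no proof of this lemma at all: the statement is presented as a reformulation of Propositions~2 and~3 of Korchm\'aros--Mazzocca \cite{KM94}, and the terminal \qed\ signals a quoted result. Your argument is a self-contained substitute resting on two pillars, both sound. The first is the characterization of $\mathcal{N}(\mathcal{B})$ as exactly the points $P$ off $\ell_1 \cup \ell_2$ with $\pi_{\ell_1,P,\ell_2}(b_1) = b_2$; your line-by-line count through $P$ (separating the line $PZ$, and the case $Z \in b_1 \cap b_2$) is exactly right, and it makes (i) immediate. The second is the classical theorem that a projectivity between two distinct lines of a Pappian plane fixing their common point is a perspectivity, which holds in $\PG(2,q^2)$ and is indeed the step that produces a genuine nucleus from an abstract affinity. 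With these, your identification of $\Gamma_1$ with the full setwise stabilizer $H$ of $b_1$ inside the affinities of $\ell_1 \setminus \{Z\}$ settles the real content of (ii) --- closure of $\Gamma_1$ under composition and inversion, which is nontrivial precisely because the paper defines $\Gamma_1$ as a \emph{set} of products rather than a generated group --- and in fact proves slightly more than the lemma asserts (not just ``a group'' but the whole stabilizer, equivalently $|\mathcal{N}(\mathcal{B})| = |\Gamma_1|$ whenever nuclei exist). What the citation buys the paper is brevity and, more importantly, access to the finer structure theory of \cite{KM94} used immediately afterwards (the $\mathbf{A}\mathbf{B}$ decomposition and the orbit lengths $p^h$ and $tp^h$), which your argument does not reproduce; what your proof buys is independence from \cite{KM94} for this particular statement, plus the sharper conclusion. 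Your handling of the degenerate case $\mathcal{N}(\mathcal{B}) = \emptyset$ as vacuous is the only reasonable reading (the empty set is not a group); when nuclei exist, taking $P = Q$ shows $\Gamma_1$ contains the identity, so no further care is needed.
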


Define the integer $r$ by $q^2=p^r$. The order of the group $\Gamma_1$ is $tp^{h}$, where $p \nmid t$, and
$\Gamma_{1}$ is isomorphic to some group $\Gamma = \mathbf{A}\mathbf{B}$ of
affinities where $\mathbf{B}$ is an additive subgroup of order $p^{h}$ of
$\GF{\left( q^{2} \right)}$ and $\mathbf{A}$ is a multiplicative subgroup of
order $t$ of $\GF{\left( q^{2} \right)}$ such that $t \mid p^{\gcd\left( r, h
\right)} - 1$. Let $m = \left( p^{r - h} - 1 \right) / t$ and let
$\mathbf{B}_{1} \cup \mathbf{O}_{1} \cup \ldots \cup \mathbf{O}_{m}$ be the
partition of $\ell_{1} \setminus \left\{ Z \right\}$ into $\Gamma_1$-orbits. We
have by \cite[Section 2]{KM94} that $\mathbf{B}_{1}$ has length $p^{h}$ and for
each $i = 1, 2, \ldots, m$ the orbit $\mathbf{O}_{i}$ has length $tp^{h}$.

Let $\mathcal{B}_{i} = \ell_{i} \cap \mathcal{B}$ for $i = 1, 2$ and let
$\widehat{\mathcal{B}}_{1} = \mathcal{B}_{1} \setminus \left\{ Z \right\}$,
then $\widehat{\mathcal{B}}_{1}$ is the union of $\Gamma_{1}$-orbits. It
follows that the size of $\widehat{\mathcal{B}}_{1}$ must be divisible by
$p^{h}$, and we must distinguish between two cases:
\begin{enumerate}
  \item If the blocks $b_{1}$ and $b_{2}$ are disjoint, it means $b_{1} =
    \mathcal{B}_{1} = \widehat{\mathcal{B}}_{1}$, hence $p^{h} \mid q + 1$. It
    is possible only for $h = 0$, thus the group $\mathbf{B}$ is trivial.
  \item Otherwise $b_{1} \cap b_{2} = \left\{ Z \right\}$, meaning $b_{1} =
    \mathcal{B}_{1} = \widehat{\mathcal{B}}_{1} \cup \left\{ Z \right\}$, hence
    the size of $\widehat{\mathcal{B}}_{1}$ is $q$. In this case $q = ap^{h} +
    btp^{h}$, where $b \in \left\{ 0, 1, \ldots, m \right\}$ and $a = 1$ or
    $0$, depending on whether $\mathbf{B}_{1} \subseteq
    \widehat{\mathcal{B}}_{1}$ or not. If $a = 0$, then $q = btp^{h}$, and as
    $p \nmid t$ we have $t = 1$, therefore the group $\mathbf{A}$ is trivial.
\end{enumerate}

\begin{lemma}\label{lem:gamma1-has-fixedpoint}
  Let $U = \left( X, B \right)$ be a unital of order $q$ embedded in
  $\PG{\left( 2, q^{2} \right)}$ and let $b_1, b_2 \in B$ be two disjoint
  blocks of $U$. Denote the lines containing the blocks $b_1$ and $b_2$ by
  $\ell_1$ and $\ell_2$ respectively. Write $Z=\ell_1 \cap \ell_2$ and
  $\mathcal{B} = b_1 \cup \left( \ell_2 \setminus b_2 \right)$. Define the
  group $\Gamma_1$ generated by the perspectivities $\pi_{\ell_1,P,\ell_2}
  \pi_{\ell_2,Q,\ell_1}$ with $P,Q \in \mathcal{N}{\left( \mathcal{B} \right)}$
  where $\mathcal{N}{\left( \mathcal{B} \right)}$ denotes the set of all nuclei
  of $\mathcal{B}$. Then the following hold:
  \begin{enumerate}[(i)]
    \item $p \nmid \left| \Gamma_1 \right|$. 
    \item $\Gamma_1$ is cyclic and $|\Gamma_1| \mid q^2-1$. 
    \item $\Gamma_1$ has a unique fixed point $V_1 \not\in b_1 \cup \left\{
      Z \right\}$. 
    \item The set of full points $F_{U}{\left( b_1, b_2 \right)}$ is contained
      in a line $m$ through $V_1$ but $Z$.
  \end{enumerate}
\end{lemma}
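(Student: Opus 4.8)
The plan is to read off parts (i)--(iii) from the orbit analysis carried out immediately before the statement, combined with Lemma~\ref{lem:agl1-prop}, and then to settle (iv) by a short perspectivity computation. First, in the disjoint case the preceding discussion forces $h=0$: since $\widehat{\mathcal{B}}_1=b_1$ is a union of $\Gamma_1$-orbits, its size $q+1$ is divisible by $p^h$, and as $q$ is a power of $p$ this is only possible for $h=0$. Hence $|\Gamma_1|=t$ with $p\nmid t$, which is (i). Because $\Gamma_1$ is a group of affinities of the affine line $\ell_1\setminus\{Z\}\cong\GF(q^2)$ whose order is prime to $p$, Lemma~\ref{lem:agl1-prop}(ii) yields at once that $\Gamma_1$ is cyclic, that $|\Gamma_1|\mid q^2-1$, and that $\Gamma_1$ has a unique fixed point $V_1\in\ell_1\setminus\{Z\}$; this gives (ii) and the fixed-point part of (iii).

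For the remaining claim of (iii) I would compare orbit lengths. With $h=0$ the orbit $\mathbf{B}_1$ is exactly the single fixed point $V_1$, while each $\mathbf{O}_i$ has length $t$. If $V_1\in b_1$, then $b_1\setminus\{V_1\}$ would be a union of orbits of length $t$, forcing $t\mid q$; since $p\nmid t$ and $q$ is a power of $p$, this gives $t=1$, i.e.\ $\Gamma_1$ trivial. This is precisely the degenerate case $|\mathcal{N}(\mathcal{B})|\le 1$, in which (iv) is vacuous. Assuming $\Gamma_1$ nontrivial we therefore get $V_1\notin b_1$ (so in fact $t\mid q+1$), and $V_1\notin b_1\cup\{Z\}$ as required.

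The heart of the matter is (iv), and the key observation is that the image $\pi_{\ell_1,P,\ell_2}(V_1)$ is independent of the full point $P$. Indeed, for any two full points $P,Q$ the product $g_{P,Q}=\pi_{\ell_1,P,\ell_2}\pi_{\ell_2,Q,\ell_1}$ lies in $\Gamma_1$ and hence fixes $V_1$. Since $g_{P,Q}$, regarded as a permutation of $\ell_1$, first projects $\ell_1$ onto $\ell_2$ from $P$ and then $\ell_2$ back onto $\ell_1$ from $Q$, the relation $g_{P,Q}(V_1)=V_1$ rearranges (using $\pi_{\ell_2,Q,\ell_1}=\pi_{\ell_1,Q,\ell_2}^{-1}$) to $\pi_{\ell_1,P,\ell_2}(V_1)=\pi_{\ell_1,Q,\ell_2}(V_1)$. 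Thus there is a single point $V_2\in\ell_2$ with $\pi_{\ell_1,P,\ell_2}(V_1)=V_2$ for every full point $P$. By the definition of a perspectivity this says $V_2=\ell_2\cap PV_1$, i.e.\ $P$ lies on the line $m:=V_1V_2$; hence $F_U(b_1,b_2)\subseteq m$.

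Finally I would check that $m$ passes through $V_1$ (immediate) but not through $Z$. Here one first notes $V_2\ne Z$: each $\pi_{\ell_1,P,\ell_2}$ fixes $Z$ and is injective, so $V_2=\pi_{\ell_1,P,\ell_2}(V_1)=Z$ would force $V_1=Z$, against (iii). Then $V_1\in\ell_1$ and $V_2\in\ell_2$ are distinct points (a common point would lie in $\ell_1\cap\ell_2=\{Z\}$), both different from $Z$; if $Z\in m=V_1V_2$, then $m$ would meet $\ell_1$ in the two distinct points $Z,V_1$, forcing $m=\ell_1$ and hence $V_2\in\ell_1\cap\ell_2=\{Z\}$, a contradiction. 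So $m$ passes through $V_1$ but avoids $Z$. I expect the only genuine subtlety to be the bookkeeping that isolates the degenerate case $t=1$ (no or one full point, $\Gamma_1$ trivial), where (iii)--(iv) hold vacuously; once $\Gamma_1$ is nontrivial, the invariance of $\pi_{\ell_1,P,\ell_2}(V_1)$ does all the work.
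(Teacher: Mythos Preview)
Your argument is correct. Parts (i)--(iii) match the paper's treatment: you invoke the $h=0$ conclusion from the orbit analysis preceding the lemma and then apply Lemma~\ref{lem:agl1-prop}, exactly as the paper does (the paper also offers an alternative fixed-point argument for (i), but yours is equally valid and already implicit in the surrounding text). Your handling of the degenerate case $t=1$ is appropriate and matches how the lemma is actually applied in Theorem~\ref{thm:embed-fptregular}.

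The genuine difference is in (iv). The paper does not argue directly: it cites \cite[p.~67]{KM94} for the fact that, once $\mathbf{B}$ is trivial, the whole nucleus set $\mathcal{N}(\mathcal{B})$ lies on a line $m$ missing $Z$, and then uses Lemma~\ref{lem:persp-elm} to conclude $V_1\in m$. You instead prove collinearity from scratch: the observation that $\pi_{\ell_1,P,\ell_2}(V_1)$ is independent of $P$ (because every generator of $\Gamma_1$ fixes $V_1$) immediately forces all full points onto the line $V_1V_2$, and your verification that $Z\notin V_1V_2$ is clean. Your route is more elementary and self-contained, avoiding the external reference; what the paper's route buys is a slightly stronger intermediate statement (all of $\mathcal{N}(\mathcal{B})$, not just $F_U(b_1,b_2)$, is collinear), though only the weaker version is needed downstream. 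Note that your perspectivity computation in fact applies verbatim with $P,Q$ ranging over $\mathcal{N}(\mathcal{B})$ rather than just $F_U(b_1,b_2)$, so you could recover the stronger statement too if desired.
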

\begin{proof}
  Assume that $\Gamma_1$ has an element $\gamma$ of order $p$.  Since $b_{1}$
  is $\Gamma_1$-invariant, $\gamma$ has a fixed point in $b_{1}$, different
  from $Z$ as $Z \not\in b_1$. However, affinities with two fixed points
  have order dividing $q^2-1$. This proves (i).
  
  Together with Lemmas~\ref{lem:agl1-prop} and~\ref{lem:persp-elm}, (i)
  implies (ii) and (iii). Notice that Lemma \ref{lem:agl1-prop}(i) is needed to
  show that $V_1\not\in b_1$.
  
  Since $\mathbf{B}$ is trivial, the set of nuclei $\mathcal{N}{\left(
  \mathcal{B} \right)}$ is contained in a line $m$ such that $Z \not\in m$
  (cf.~\cite[p.~67]{KM94}). In particular $F_{U}{\left( b_1, b_2 \right)}$ is
  contained in $m$ as $F_{U}{\left( b_1, b_2 \right)} \subseteq
  \mathcal{N}{\left( \mathcal{B} \right)}$. Furthermore, by Lemma
  \ref{lem:persp-elm}, for any $P,Q\in \mathcal{N}{\left( \mathcal{B} \right)}$
  the line $PQ$ contains $V_1$, hence $V_1 \in m$. This proves (iv).
\end{proof}

We can now state and prove the main theorem of this section. 

\begin{theorem}\label{thm:embed-fptregular}
  If the unital $U$ of order $q$ is embedded in $\PG{\left( 2, q^{2} \right)}$
  then it is strongly full point regular.
\end{theorem}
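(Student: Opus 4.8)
The plan is to verify the definition of strong full point regularity directly. Fix two arbitrary disjoint blocks $b_1,b_2$, let $\ell_1,\ell_2$ be the lines of $\PG(2,q^2)$ carrying them, and set $Z=\ell_1\cap\ell_2$. Everything will follow from Lemma~\ref{lem:gamma1-has-fixedpoint} applied to this pair (and, for one step, to the reversed pair), so the whole argument is really a matter of transferring the information about $\Gamma_1$ to the two objects named in the definition, namely $\Persp_{b_2}(b_1)$ and $F_U(b_1,b_2)$.

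First I would treat the group of perspectivities. The key observation is that for a full point $P$ the unital perspectivity $\pi_{b_1,P,b_2}$ is nothing but the restriction to $b_1$ of the projective perspectivity $\pi_{\ell_1,P,\ell_2}$: for $R\in b_1$ the line $PR$ meets $\ell_2$ in a point which, because $P$ is full, lies in $b_2$. Since $F_U(b_1,b_2)\subseteq\mathcal{N}(\mathcal{B})$, every generator $\pi_{b_1,P,b_2}\pi_{b_2,Q,b_1}$ of $\Persp_{b_2}(b_1)$ is the restriction to $b_1$ of a generator of $\Gamma_1$. As $\Gamma_1$ leaves $b_1$ invariant, restriction to $b_1$ is a homomorphism, so $\Persp_{b_2}(b_1)$ is (isomorphic to) a subgroup of $\Gamma_1$; by Lemma~\ref{lem:gamma1-has-fixedpoint}(ii) it is therefore cyclic. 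For semi-regularity I would use Lemma~\ref{lem:gamma1-has-fixedpoint}(iii): $\Gamma_1$ has a unique fixed point $V_1\notin b_1\cup\{Z\}$, and since $|\Gamma_1|\mid q^2-1$, Lemma~\ref{lem:agl1-prop}(i) shows every nontrivial $g\in\Gamma_1$ has a single fixed point on $\ell_1\setminus\{Z\}$, which must be $V_1$. Hence no nontrivial element fixes a point of $b_1$, so $\Gamma_1$, and therefore its subgroup $\Persp_{b_2}(b_1)$, acts semi-regularly on $b_1$.

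Next I would establish full point regularity. By Lemma~\ref{lem:gamma1-has-fixedpoint}(iv), $F_U(b_1,b_2)$ lies on a line $m$ with $V_1\in m$ and $Z\notin m$. Assume first $|F_U(b_1,b_2)|\geq 2$. Then $m$ meets $X$ in at least two points, and since two collinear points of $X$ determine the block cut out by their line, $c:=m\cap X$ is a block containing $F_U(b_1,b_2)$. It remains to check $c\cap b_1=c\cap b_2=\emptyset$. As $m$ and $\ell_1$ are distinct lines meeting only in $V_1$ and $V_1\notin b_1$, we get $c\cap b_1\subseteq\{V_1\}\cap b_1=\emptyset$. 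For $b_2$ I would invoke symmetry: applying Lemma~\ref{lem:gamma1-has-fixedpoint} to the pair $(b_2,b_1)$ yields a line $m'$ through a fixed point $V_2\notin b_2\cup\{Z\}$ with $F_U(b_2,b_1)=F_U(b_1,b_2)\subseteq m'$; since two points of $F_U$ determine a unique line, $m=m'$, whence $V_2\in m\cap\ell_2$ and $c\cap b_2\subseteq\{V_2\}\cap b_2=\emptyset$. Thus $(U,b_1,b_2)$ is full point regular. The cases $|F_U(b_1,b_2)|\leq 1$ are immediate: the empty case is covered by the convention noted after the definition (a unital with no full points is strongly full point regular), and a single full point generates the trivial perspectivity group, which is vacuously cyclic and semi-regular.

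I expect the main obstacle to be the disjointness $c\cap b_2=\emptyset$. Disjointness from $b_1$ is built into Lemma~\ref{lem:gamma1-has-fixedpoint} via $V_1$, but the symmetric datum $V_2\in\ell_2$ only appears after reapplying the lemma to $(b_2,b_1)$ and identifying $m=m'$, which in turn needs $|F_U(b_1,b_2)|\geq 2$. A secondary point requiring care is the identification $\Persp_{b_2}(b_1)\le\Gamma_1$: it relies on the $\Gamma_1$-invariance of $b_1$ and on $F_U(b_1,b_2)\subseteq\mathcal{N}(\mathcal{B})$, and one must respect the composition convention so that the generators genuinely agree on $b_1$.
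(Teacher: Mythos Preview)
Your proposal is correct and follows the same line as the paper's proof: both reduce everything to Lemma~\ref{lem:gamma1-has-fixedpoint}, read off cyclicity and semi-regularity of $\Persp_{b_2}(b_1)$ from the corresponding properties of $\Gamma_1$, and obtain the block $c$ from the line $m$ of part~(iv). Your write-up is in fact more explicit than the paper's on two points the paper leaves to the reader: the identification of $\Persp_{b_2}(b_1)$ with (a subgroup of) $\Gamma_1\!\mid_{b_1}$, and the disjointness $c\cap b_2=\emptyset$, which you obtain by applying the lemma to the reversed pair and using $|F_U(b_1,b_2)|\ge 2$ to force $m=m'$.
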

\begin{proof}
  Let us assume that $U$ is embedded in $\PG(2,q^2)$. Let $b_1,b_2$ be two
  disjoint blocks of $U$. If $|F_U(b_1,b_2)|\leq 1$ then we have nothing to
  prove. Otherwise, by Lemma~\ref{lem:gamma1-has-fixedpoint} $F_U(b_1,b_2)$ is
  contained in a block $c$, which is disjoint to $b_1$ and $b_2$. Furthermore,
  $\Persp_{b_2}(b_1)$ is cyclic, its order divides $q^2-1$ and $b_1$ decomposes
  into orbits of equal lengths. This means that $(U,b_1,b_2)$ is a strongly
  full point regular triple. 
\end{proof}

\section{Full points of the Hermitian unital}

For a prime power $q$, let $\rho$ be a Hermitian polarity of $\PG(2,q^2)$. Two
points $P,Q$ are said to be \textit{conjugate} if $P\in Q^\rho$. Similarly, the
lines $\ell,m$ are \textit{conjugate} if $\ell^\rho \in m$. Let $R^+$ be the set
of pairs $(\ell,m)$, where $\ell,m$ are conjugate lines to each other but not
self-conjugate. The projective unitary group $\PGU(3,q)$ acts transitively on
$R^+$. Given two conjugate lines $\ell_1,\ell_2$, one constructs
$\ell_3=(\ell_1\cap \ell_2)^\rho$, conjugate to both
$\ell_1$ and $\ell_2$. We say that $\ell_1,\ell_2,\ell_3$ form a \textit{polar
triangle.} The projective unitary group $\PGU(3,q)$ acts transitively on the set
of polar triangles. Consider the set ${X}$ of self-conjugate points of $\rho$;
$|{X}|=q^3+1$. The line $\ell$ intersects ${X}$ in $1$ or $q+1$ points,
depending on if $\ell$ is self-conjugate or not. Let $\ell$ be a non
self-conjugate line and $m$ be a line connecting $\ell^\rho$ and a point $P\in
X\cap \ell$. Since $\ell^\rho \in P^\rho$, we have $m=P^\rho$ which must be a
self-conjugate line. This means that $(\ell,\ell')\in R^+$ implies that
$\ell\cap \ell' \not\in X$. It follows that any non self-conjugate line $\ell$
is contained in exactly $q(q-1)/2$ polar triangles. For further details and
background, see \cite[Section 7.3]{Hir98}

The abstract Hermitian unital $\mathcal{H}(q)$ is constructed from the set ${X}$ of self-conjugate points of $\rho$. The subsets cut out by the $(q+1)$-secants (not self-conjugate lines) form the set $B$ of blocks of $\mathcal{H}(q)$. Notice that we consider $\mathcal{H}(q)$ as an abstract unital, having a natural embedding in $\PG(2,q^2)$. The following proposition gives a characterization of the conjugate relation in terms of the abstract unital $\mathcal{H}(q)$ for $q$ even. 

\begin{proposition} \label{pr:even-hermitian}
Let $q$ be even, let $\rho$ be a Hermitian polarity of $\PG(2,q^2)$ and let $X$ be the set of self-conjugate points of $\rho$. Let $\ell_1,\ell_2$ be not self-conjugate lines and define the blocks $b_i=\ell_i\cap X$ of $\mathcal{H}(q)$, $i=1,2$. Then the following hold:
\begin{enumerate}[(i)]
\item If $\ell_1,\ell_2$ are conjugate, then $F_{\mathcal{H}(q)}(b_1,b_2)=b_3$, where $b_3=\ell_3\cap X$ with $\ell_3=(\ell_1\cap \ell_2)^\rho$. In other words, the blocks contained in a polar triangle form an embedded dual $3$-net of $\mathcal{H}(q)$. 
\item If $\ell_1,\ell_2$ are not conjugate then either $b_1\cap b_2\neq \emptyset$, or $|F_{\mathcal{H}(q)}(b_1,b_2)|=1$. 
\end{enumerate}
%Let $P_1P_2P_3$ be a polar triangle and define the blocks $b_i=X\cap P_jP_k$ for $\{i,j,k\}=\{1,2,3\}$. Then, $F_{\mathcal{H}(q)}(b_i,b_j)=b_k$. 
%Let $q$ be even and let $U = \mathcal{H}_{q}$ be the Hermitian unital of
%order $q$. If for the blocks $b_{1}$ and $b_{2}$ of the unital holds that
%the polar $b_{1}^{\perp}$ of the block $b_{1}$ is an element of $b_{2}$,
%then the set of full points $F_{U}{\left( b_{1}, b_{2} \right)} = b_{3}$
%(also a block of the unital), such that $b_{1}, b_{2}$ and $b_{3}$ form a
%polar triangle.
\end{proposition}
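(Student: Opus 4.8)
The plan is to fix a diagonal Hermitian form, namely $X\colon x_0^{q+1}+x_1^{q+1}+x_2^{q+1}=0$, use the transitivity of $\PGU(3,q)$ to normalize the configuration, and then reduce each part to an explicit computation over $\GF(q^2)$ in which the hypothesis that $q$ is even enters twice: once as $1+1=0$, and once as the unique solvability of $w^2=a$ in $\GF(q)$. In both parts I will write $Z=\ell_1\cap\ell_2$ and $\ell_3=Z^\rho$, and the conceptual upshot will be that \emph{every} full point lies on $\ell_3$.

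For (i) I may assume, by transitivity of $\PGU(3,q)$ on polar triangles, that $\ell_1=\{x_0=0\}$, $\ell_2=\{x_1=0\}$ and $\ell_3=\{x_2=0\}$, so that $Z=(0,0,1)\notin X$, $b_1=\{(0,1,z):z^{q+1}=1\}$ and $b_3=\{(1,\delta,0):\delta^{q+1}=1\}$. To prove $b_3\subseteq F_{\mathcal{H}(q)}(b_1,b_2)$ I take $P=(1,\delta,0)\in b_3$ and $Q=(0,1,z)\in b_1$ and compute the intersection of the line $PQ$ with $\ell_2$ to be $(1,0,\delta z)$; this point lies on $X$ because $1+(\delta z)^{q+1}=1+1=0$, which is exactly where $q$ even is used. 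Hence every $P\in b_3$ is full. For the reverse inclusion I note that $Z\notin X$ makes $b_1,b_2$ disjoint, so Lemma~\ref{lem:gamma1-has-fixedpoint}(iv) confines $F_{\mathcal{H}(q)}(b_1,b_2)$ to a line $m$; since $b_3\subseteq F_{\mathcal{H}(q)}(b_1,b_2)\subseteq m$ and $b_3$ consists of $q+1\geq 3$ collinear points of $\ell_3$, necessarily $m=\ell_3$, and as full points belong to $X$ I conclude $F_{\mathcal{H}(q)}(b_1,b_2)\subseteq \ell_3\cap X=b_3$. Equality follows, and Lemma~\ref{lem:dual_k_nets}(iv) then gives the embedded dual $3$-net.

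For (ii), if $Z\in X$ then $b_1\cap b_2=\{Z\}\neq\emptyset$ and the first alternative holds, so I assume $Z\notin X$ (hence $b_1,b_2$ disjoint). Since every external point of a non-self-conjugate line is a vertex of a polar triangle through that line, transitivity again lets me take $\ell_1=\{x_0=0\}$, $Z=(0,0,1)$ and $\ell_3=Z^\rho=\{x_2=0\}$; the hypotheses that $\ell_2$ is non-self-conjugate and not conjugate to $\ell_1$ then pin $\ell_2$ down as $\{x_0+cx_1=0\}$ with $c^{q+1}\neq1$. The core step is to require, for a candidate full point $P=(p_0,p_1,p_2)\in X\setminus(\ell_1\cup\ell_2)$, that projection from $P$ sends every $Q=(0,1,z)\in b_1$ (so $z^{q+1}=1$) into $b_2$. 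Writing the intersection of $PQ$ with $\ell_2$ and substituting into the equation of $X$, and using $P\in X$ together with $z^{q+1}=1$, the condition collapses to
\[ T(\nu p_1^q)+T(\nu p_2^q z)=0 \quad\text{for all } z \text{ with } z^{q+1}=1, \]
where $\nu=(p_0+cp_1)/c\neq0$ and $T(x)=x+x^q$ is the $\GF(q^2)/\GF(q)$-trace. Multiplying by $z$ turns the left-hand side into a quadratic polynomial in $z$ that vanishes at the $q+1\geq3$ norm-one values, hence is the zero polynomial; this forces $p_2=0$, i.e.\ $P\in\ell_3$, together with a trace condition. Restricting to $\ell_3$, the full points are exactly the $P=(1,v,0)$ with $v^{q+1}=1$ and $vc\in\GF(q)$ (the conditions being also sufficient); putting $w=vc$ yields $w^2=c^{q+1}$, which has a \emph{unique} root in $\GF(q)$ because $q$ is even, so there is exactly one full point.

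The main obstacle is the bookkeeping in (ii): expanding $(p_1+\nu)^{q+1}+(p_2+\nu z)^{q+1}$, cancelling the $z$-independent and $\nu^{q+1}$ terms in characteristic two, and recognizing the residual $z$-dependence as a quadratic are the delicate points, as is the verification that the unique candidate $P$ really lies off $\ell_1\cup\ell_2$ (which uses $c^{q+1}\neq1$). One should also confirm that the stated hypotheses on $\ell_2$ genuinely reduce it to the one-parameter family $\{x_0+cx_1=0\}$, $c^{q+1}\neq1$, so that the count of full points is independent of the remaining parameter $c$.
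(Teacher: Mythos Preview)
Your argument for part (i) is correct and matches the paper's proof almost verbatim: both normalize the polar triangle to the coordinate axes, check collinearity via the norm-one condition (using $1+1=0$), and invoke the full point regularity of embedded unitals for the reverse inclusion; the only cosmetic difference is that the paper cites Theorem~\ref{thm:embed-fptregular} directly (so that $F$ sits in a \emph{block}, forcing $c=b_3$ by cardinality) while you go through Lemma~\ref{lem:gamma1-has-fixedpoint}(iv).

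For part (ii) the approaches genuinely diverge. The paper does not argue at all: it simply refers to \cite[Section~2.2]{KSSz2018}, where the non-conjugate disjoint case is handled. You instead give a self-contained computation: normalize to $\ell_1=\{x_0=0\}$, $Z=(0,0,1)$, $\ell_2=\{x_0+cx_1=0\}$ with $c^{q+1}\neq 1$, expand the Hermitian condition for the projected point, use characteristic $2$ to kill the $\nu^{q+1}$ terms, and recognize the residual as a degree-two polynomial in $z$ vanishing on the $q+1$ norm-one elements, forcing $p_2=0$ and a trace condition that unwinds to $w^2=c^{q+1}$ with $w\in\GF(q)$. The count of one full point then comes from unique square roots in even characteristic, and $c^{q+1}\neq 1$ guarantees the candidate lies off $\ell_2$. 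This is correct and has the advantage of being elementary and internal to the paper; the price is the bookkeeping you flag in your final paragraph. One small point: your justification for the normalization via ``every external point of a non-self-conjugate line is a vertex of a polar triangle through that line'' is a little roundabout---what you actually need, and what holds, is that the stabilizer of $\ell_1$ in $\PGU(3,q)$ is transitive on $\ell_1\setminus X$, which follows from Witt's theorem.
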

\begin{proof}
(i) Up to projective equivalence, we can assume that the matrix of $\rho$ is the identity. Since the unitary group $\PGU(3,q)$ acts transitively on $R^+$, we can assume $\ell_1:X_1=0$ and $\ell_2:X_2=0$. Then, $\ell_1\cap\ell_2=(0,0,1)$ and $\ell_3:X_3=0$. Let $\varepsilon$ be a $(q+1)$th root of unity in $\mathbb{F}_{q^2}$. The elements of $b_s=\ell_s\cap X$, $s=1,2,3$, have the form
\[A_i=(0,1,\varepsilon^i), \qquad B_j=(\varepsilon^j,0,1), \qquad C_k=(1,\varepsilon^k,0),\]
respectively, with $i,j,k=0,1,\ldots,q$. Since the points $A_i,B_j,C_k$ are collinear if and only if $\varepsilon^{i+j+k}=1$, we see that $A_i$ projects from $C_k$ to $B_{-i-k}$. In particular, $b_3\subseteq F_{\mathcal{H}(q)}(b_1,b_2)$, and equality holds by Theorem \ref{thm:embed-fptregular}. 

(ii) The case when $\ell_1,\ell_2$ are not conjugate and $b_1\cap b_2=\emptyset$ was elaborated in \cite[Section 2.2]{KSSz2018}. 
\end{proof}

\begin{remark} \label{rem:Hq_has_polartriangles}
Proposition \ref{pr:even-hermitian} shows that for $q$ even, $\mathcal{H}(q)$ has embedded dual $3$-nets. More precisely, any block of $\mathcal{H}(q)$ is contained in $q(q-1)/2$ polar triangles. The group of automorphisms of $\mathcal{H}(q)$ acts transitively on the set of embedded dual $3$-nets. 
\end{remark}

Let $\rho_0$ be a Hermitian polarity of the projective line $\PG(1,q^2)$. The set of self-conjugate points of $\rho_0$ forms a subline $\PG(1,q)$, cf. \cite[Lemma 6.2]{Hir98}. Let $\ell$ be a line of $\PG(2,q^2)$. A \textit{Baer subline} of $\ell$ is subset of size $q+1$, consisting of self-conjugate points of some Hermitian polarity $\rho$ of $\PG(2,q^2)$. Equivalently, a Baer subline $S$ is isomorphic to $\PG(1,q)$, and $S=\ell \cap \Pi$ for some line $\ell$ and a Baer subplane $\Pi$.

\begin{proposition} \label{prop:polartriables_baersublines}
Let $U=(X,B)$ be an abstract unital of order $q$, embedded in $\PG(2,q^2)$. Let $b_1,b_2,b_3$ form an embedded dual $3$-net. Then $b_1,b_2,b_3$ are Baer sublines. 
\end{proposition}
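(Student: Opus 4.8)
The plan is to reduce the statement to a single block, say $b_1$, and to identify it with a coset of the group $\mu_{q+1}$ of $(q+1)$-th roots of unity on its carrier line; these cosets are the prototypical Baer sublines. By the built-in symmetry of the dual $3$-net (Lemma~\ref{lem:dual_k_nets} shows each $b_i$ lies in the full point set of the other two), it suffices to treat $b_1$. Let $\ell_1,\ell_2,\ell_3$ be the carrier lines and $Z_{ij}=\ell_i\cap\ell_j$ the triangle vertices; since the blocks are pairwise disjoint the $Z_{ij}$ lie off $X$. After checking that $\ell_1,\ell_2,\ell_3$ are not concurrent (otherwise each generating product of perspectivities would fix only $Z_{12}$ on $\ell_1$, forcing it to be a translation of $p$-power order, contrary to Lemma~\ref{lem:gamma1-has-fixedpoint}(i)), I may pass to the coordinates of Proposition~\ref{pr:even-hermitian}, taking $\ell_s\colon X_s=0$, so that $b_1\subseteq\{(0,1,t)\mid t\in\GF(q^2)^\ast\}$ with $Z_{13}=(0,1,0)$ at $t=0$ and $Z_{12}=(0,0,1)$ at $t=\infty$.

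Next I would pin down the full point set and the perspectivity group. Since $b_3\subseteq F_U(b_1,b_2)$ lies in a single line $m$ by Lemma~\ref{lem:gamma1-has-fixedpoint}(iv), and $b_3$ already spans $\ell_3$, we get $m=\ell_3$; as $\ell_3$ is a block line, $F_U(b_1,b_2)\subseteq m\cap X=\ell_3\cap X=b_3$, hence $F_U(b_1,b_2)=b_3$. Now consider $\Persp_{b_2}(b_1)$, generated by the products $\pi_{b_1,P,b_2}\pi_{b_2,Q,b_1}$ with $P,Q\in F_U(b_1,b_2)=b_3$. Because $b_1$ has $q+1\ge 3$ points, each such map extends uniquely to a projectivity of $\ell_1\cong\PG(1,q^2)$. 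The key observation is that \emph{all} generators share the same two fixed points: by Lemma~\ref{lem:persp-elm} the generator from $P,Q$ fixes $Z=Z_{12}$ and $\ell_1\cap PQ$, and since $P,Q\in b_3\subseteq\ell_3$ the line $PQ$ equals $\ell_3$, so the second fixed point is $\ell_1\cap\ell_3=Z_{13}$ for every generator. Thus $\Persp_{b_2}(b_1)$ fixes the distinct points $Z_{12}=\infty$ and $Z_{13}=0$, and in the chosen coordinates it consists of diagonal maps $t\mapsto\lambda t$ with $\lambda$ in a cyclic subgroup of $\GF(q^2)^\ast$.

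It then remains to determine the order and orbits. By Theorem~\ref{thm:embed-fptregular} the embedding is strongly full point regular, so $\Persp_{b_2}(b_1)$ is cyclic and semi-regular on $b_1$; on the other hand $|\Persp_{b_2}(b_1)|\ge|F_U(b_1,b_2)|=|b_3|=q+1$. A free action on the $q+1$ points of $b_1$ forces the order to be at most $q+1$, hence exactly $q+1$ and regular. Consequently the associated cyclic subgroup of $\GF(q^2)^\ast$ has order $q+1$, so it is the unique such subgroup $\mu_{q+1}$, and $b_1$, being one orbit, is a coset $t_0\,\mu_{q+1}$. Finally I would invoke the classical fact that $\mu_{q+1}$, and hence every coset of it, is projectively equivalent to $\PG(1,q)$—for instance via $t\mapsto(t-\alpha)/(t-\alpha^q)$ with $\alpha\in\GF(q^2)\setminus\GF(q)$—so that $b_1$ is a Baer subline; the same argument applies verbatim to $b_2$ and $b_3$.

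The hard part is the middle step: upgrading the purely abstract ``cyclic and semi-regular'' conclusion of Theorem~\ref{thm:embed-fptregular} into a concrete realization of $\Persp_{b_2}(b_1)$ as the norm-one torus $\mu_{q+1}$ acting by multiplication. This requires both the exact count $|\Persp_{b_2}(b_1)|=q+1$ with a regular action (combining semi-regularity with $F_U(b_1,b_2)=b_3$) and the observation that all generators fix the \emph{same} second point $Z_{13}$, which is what makes the cyclic group simultaneously diagonalizable rather than merely abstractly cyclic. Once $b_1$ is recognized as an orbit of $\mu_{q+1}$, its identification with a Baer subline is routine.
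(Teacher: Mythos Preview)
Your argument is correct and follows the same strategy as the paper: obtain from Theorem~\ref{thm:embed-fptregular} a cyclic group of projectivities of order $q+1$ acting regularly on $b_1$, and deduce that $b_1$ is a Baer subline. The paper outsources this last deduction to \cite[Section 3]{KSSz2018}, whereas you carry it out explicitly by pinning down the two common fixed points $Z_{12},Z_{13}$ of all generators, realizing the group as $\mu_{q+1}$ in suitable coordinates, and recognizing $b_1$ as a coset of $\mu_{q+1}$.
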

\begin{proof}
Let $\ell$ be the projective line containing $b_1$. By Theorem \ref{thm:embed-fptregular}, $C=\Persp_{b_2}(b_1)$ is a cyclic subgroup of order $q+1$, preserving $b_1$. Since $C$ is obtained using projections in $\PG(2,q^2)$, it is a subgroup of the projectivity group of $\ell$. By the arguments of \cite[Section 3]{KSSz2018} one shows that $b_1$ is a Baer subline  of $\ell$. 
\end{proof}

\begin{remark}
Let $q$ be even, and consider an \textit{arbitrary} embedding of the Hermitian unital $\mathcal{H}(q)$ in $\PG(2,q^2)$. By Remark \ref{rem:Hq_has_polartriangles} and Proposition \ref{prop:polartriables_baersublines}, all blocks correspond to Baer sublines of $\PG(2,q^2)$. Using the characterization of Hermitian curves from \cite{FainaKor1983,LPercsy}, this observation gives a simple proof of the main theorem in \cite{KSSz2018} in the even $q$ case.
\end{remark}

\section{Full points and dual $3$-nets of known small unitals}

In this section we present computational results on the structure of full points
of known small unitals. More precisely, we study the following classes of
abstract unitals of order at most $6$:

\begin{description}[align=right,labelwidth=3cm]
\item [Class BBT] 909 unitals of order 3 by Betten, Betten and Tonchev \cite{BBT03},
\item [Class KRC] 4466 unitals of order 3 with nontrivial automorphism groups by Kr\-\v{c}a\-di\-nac \cite{Kr06},
\item [Class KNP] 1777 unitals of order 4 by Kr\v{c}adinac, Naki\'c and Pav\v{c}evi\'c \cite{KNP11},
\item [Class BB] two cyclic unitals of order 4 and 6 by Bagchi and Bagchi \cite{BagchiBagchi1989}.
\end{description}

Notice that \KRC contains all abstract unitals of order $3$ with a nontrivial
automorphism group. As mentioned in \cite{Kr06}, 722 of the \BBT unitals appear
in \KRC. Moreover, the cyclic \BB unital of order $4$ is contained in \KNP. The
\BB unital of order $6$ has no full points, therefore we omit the \BB class
from the tables of this section. We access the libraries of small unitals and
carry out the computations using the GAP4 package \texttt{UnitalSZ}
\cite{MezNagy2018}.

\subsection{The number of full points and the structure of the group of perspectivities}

We only consider disjoint pairs of blocks admitting at least two full points as
for only one full point the perspectivitiy group is trivial. In Tables
\ref{tab:BBT-grs}, \ref{tab:KRC-grs} and \ref{tab:KNP-grs} we summarize the
existing number of full points, the structure of the group of perspectivities and
the number of unitals with such pairs for each library (\BBT, \KRC, \KNP).

\begin{table}[h!]
\caption{\label{tab:BBT-grs}\BBT unitals of order $3$}
\centering
\begin{tabular}{rlr}
\toprule
Full points & Group of perspectivities & Unitals\\
\midrule
2 & $C_{2}$ & 477\\
2 & $C_{3}$ & 94\\
2 & $C_{4}$ & 290\\
\bottomrule
\end{tabular}
\end{table}

\begin{table}[h!]
\caption{\label{tab:KRC-grs}\KRC unitals of order $3$}
\centering
\begin{tabular}{rlr}
\toprule
Full points & Group of perspectivities & Unitals\\
\midrule
2 & $C_{2}$ & 1015\\
2 & $C_{3}$ & 379\\
2 & $C_{4}$ & 897\\
3 & $S_{4}$ & 6\\
\bottomrule
\end{tabular}
\end{table}

\begin{table}[h!]
\caption{\label{tab:KNP-grs}\KNP unitals of order $4$}
\centering
\begin{tabular}{rlr}
\toprule
Full points & Group of perspectivities & Unitals\\
\midrule
2 & $C_{2}$ & 93\\
2 & $C_{4}$ & 71\\
2 & $C_{5}$ & 107\\
2 & $C_{6}$ & 5\\
3 & $A_{5}$ & 2\\
3 & $C_{2} \times C_{2}$ & 1\\
3 & $C_{4}$ & 32\\
3 & $C_{5}$ & 30\\
3 & $S_{5}$ & 3\\
4 & $C_{5}$ & 8\\
5 & $C_{5}$ & 165\\
6 & $C_{5} \rtimes C_{4}$ & 72\\
6 & $D_{10}$ & 53\\
\bottomrule
\end{tabular}
\end{table}

\subsection{The structure of the full points}

The structure of the full points is only interesting when there is at least 3
of them, hence the \BBT unitals are out of our scope. Even the case of 3 full
points is simple: they are either contained in a block or not. As \KRC unitals
admit at most 3 full points, we are only interested in the \KNP unitals.

The computation in \cite{MezNagy2018} showed that if there are 4 or 5 full
points (in the case of disjoint blocks) then either the whole set of full
points is contained in a single block, or no three points are collinear.
Similarly in the case of 6 full points either 5 of the full points form a block
or no 3 of them are collinear. Now by ``collinear'' we mean that the points
form a subset of some block of the unital.

\subsection{Unitals with large full point sets}

Let us denote by $\Omega$ the subset of unitals with at least one \textit{large}
full point set, that is, $|F_U(b_1,b_2)|\geq 3$ for a pair $(b_1,b_2)$ of
disjoint blocks. We have seen that $\Omega$ is the empty set for \BBT unitals.
By Table \ref{tab:KRC-grs}, $|\Omega|=6$ for \KRC unitals. Hence, the interesting case is the \KNP library, where the size of $\Omega$ is 206.
In Table \ref{tab:KNP-large} we present the number of \KNP unitals with some
restrictions on the structure of full points. Clearly $A \subseteq B$, $C
\subseteq \overline{B}$ and $\Omega = B \cup \overline{B}$.

\begin{table}[h!]
\caption{\label{tab:KNP-large}\KNP unitals with large full point sets}
\centering
\begin{tabular}{rlr}
\toprule
	set & property & cardinality\\
\midrule
$\Omega$ & \text{at least one \textit{large} full point set} & 206 \\
$A$ & \text{all large full point sets form a block} & 74 \\
$B$ & \text{all large full point sets are contained in a block} & 80 \\
$\overline{B}$ & \text{some large full point sets are not contained a block} & 126\\
$C$ & \text{no large full point set is contained in a block} & 1\\
\bottomrule
\end{tabular}
\end{table}

%\begin{table}[h!]
%\caption{\label{tab:KNP-large0}\KNP unitals with large full point sets}
%\centering
%\begin{tabular}{rrrrr}
%\toprule
%$\Omega$ & $A$ & $B$ & $\overline{B}$ & $C$\\
%\midrule
%206 & 74 & 80 & 126 & 1\\
%\bottomrule
%\end{tabular}
%\end{table}

\subsection{Full point regularity}

In Table \ref{tab:FullPtReg} one sees how many of the unitals of the different
libraries are full point regular (FPR) and strongly full point regular (SFPR).
In fact, if a unital is not strongly full point regular then is not embeddable
into $\PG(2, q^2)$. Hence 94 \BBT unitals, 385 \KRC unitals and 195 \KNP
unitals are definitely not embeddable into $\PG(2, q^2)$. Notice that \cite{BBPW14}
proves a much stronger result, where the authors show that there are just two 
orbits of unitals in $\PG(2,16)$, containing the Hermitian unitals and 
Buekenhout–Metz unitals, respectively. 

\begin{table}[h!]
\caption{\label{tab:FullPtReg}Full point regularity}
\centering
\begin{tabular}{lrrr}
\toprule
Library & Unitals & FPR & SFPR\\
\midrule
\BBT & 909 & 815 & 815\\
\KRC & 4466 & 4081 & 4081\\
\KNP & 1777 & 1586 & 1582\\
\bottomrule
\end{tabular}
\end{table}

\subsection{Embedded dual 3-nets}

By Proposition \ref{prop:dual_k_nets}(ii), one can find embedded dual 3-nets only among the \KNP unitals. The computation shows us that the latin squares constructed from the dual 3-nets are always of cyclic type, namely, any embedded dual 3-net is cyclic in the \KNP library.
However, we constructed a new unital of order 4 with a non-cyclic embedded dual 3-net, cf. Appendix \ref{sec:app}.

\appendix

\section{Unital of order $4$ with non-cyclic embedded dual $3$-net} \label{sec:app}

\begin{lstlisting}[language=GAP,basicstyle=\scriptsize]
LoadPackage("UnitalSZ");

bls:=[[1,2,55,64,65], [1,3,32,46,63], [1,4,7,34,45], [1,5,11,31,44],
    [1,6,12,19,54], [1,8,38,47,50], [1,9,24,27,40], [1,10,20,48,53],
    [1,13,17,49,57], [1,14,15,16,29], [1,18,33,43,58], [1,21,23,25,37],
    [1,22,51,56,60], [1,26,30,39,52], [1,28,36,41,62], [1,35,42,59,61],
    [2,3,6,30,58], [2,4,14,54,60], [2,5,29,46,47], [2,7,13,48,59],
    [2,8,34,37,40], [2,9,10,18,31], [2,11,19,32,52], [2,12,20,50,57],
    [2,15,21,43,62], [2,16,23,27,28], [2,17,33,45,61], [2,22,24,25,26],
    [2,35,38,39,41], [2,36,49,53,56], [2,42,44,51,63], [3,4,19,23,33],
    [3,5,10,39,59], [3,7,22,49,52], [3,8,14,48,65], [3,9,25,29,60],
    [3,11,15,20,34], [3,12,13,16,61], [3,17,24,28,44], [3,18,47,53,57],
    [3,21,36,40,42], [3,26,37,38,43], [3,27,35,56,64], [3,31,45,55,62],
    [3,41,50,51,54], [4,5,41,52,53], [4,6,26,31,47], [4,8,16,36,57],
    [4,9,56,58,59], [4,10,28,46,65], [4,11,21,50,64], [4,12,35,44,62],
    [4,13,30,32,51], [4,15,37,39,61], [4,17,18,20,25], [4,22,27,38,48],
    [4,24,42,49,55], [4,29,40,43,63], [5,6,7,32,37], [5,8,42,54,58],
    [5,9,12,17,51], [5,13,27,36,63], [5,14,22,61,62], [5,15,25,40,49],
    [5,16,19,20,26], [5,18,21,28,38], [5,23,30,55,60], [5,24,33,48,64],
    [5,34,35,57,65], [5,43,45,50,56], [6,8,56,62,63], [6,9,21,61,65],
    [6,10,14,40,41], [6,11,25,43,51], [6,13,38,44,55], [6,15,42,46,57],
    [6,16,22,34,64], [6,17,23,36,52], [6,18,48,49,60], [6,20,28,45,59],
    [6,24,35,50,53], [6,27,29,33,39], [7,8,20,24,51], [7,9,41,63,64],
    [7,10,11,42,60], [7,12,15,55,56], [7,14,23,26,35], [7,16,44,46,53],
    [7,17,29,38,62], [7,18,19,39,50], [7,21,27,31,57], [7,25,47,58,65],
    [7,28,30,33,40], [7,36,43,54,61], [8,9,11,13,46], [8,10,12,45,52],
    [8,15,18,27,59], [8,17,21,35,60], [8,19,43,49,64], [8,22,29,30,53],
    [8,23,32,39,44], [8,25,31,33,41], [8,26,28,55,61], [9,14,52,55,57],
    [9,15,19,28,53], [9,16,35,43,47], [9,20,22,36,39], [9,23,48,50,62],
    [9,26,32,33,42], [9,30,34,38,54], [9,37,44,45,49], [10,13,23,34,43],
    [10,15,17,30,64], [10,16,21,32,56], [10,19,25,35,55], [10,22,33,54,57],
    [10,24,36,37,47], [10,26,27,51,62], [10,29,44,50,61], [10,38,49,58,63],
    [11,12,33,38,59], [11,14,39,47,56], [11,16,18,54,62], [11,17,22,41,65],
    [11,23,24,29,57], [11,26,36,45,48], [11,27,30,49,61], [11,28,35,37,63],
    [11,40,53,55,58], [12,14,24,30,43], [12,18,23,42,65], [12,21,26,41,49],
    [12,22,28,32,47], [12,25,34,48,63], [12,27,37,53,60], [12,29,31,36,58],
    [12,39,40,46,64], [13,14,21,33,53], [13,15,41,45,47], [13,18,26,29,64],
    [13,19,24,31,56], [13,20,35,52,58], [13,22,37,42,50], [13,25,28,39,54],
    [13,40,60,62,65], [14,17,19,59,63], [14,18,37,46,51], [14,20,31,38,42],
    [14,25,36,44,64], [14,27,32,45,58], [14,28,34,49,50], [15,22,23,31,63],
    [15,24,32,38,65], [15,26,50,58,60], [15,33,35,36,51], [15,44,48,52,54],
    [16,17,37,48,58], [16,24,41,59,60], [16,25,30,42,45], [16,31,39,49,65],
    [16,33,50,55,63], [16,38,40,51,52], [17,26,34,46,56], [17,27,47,54,55],
    [17,31,32,40,50], [17,39,42,43,53], [18,22,35,40,45], [18,24,52,61,63],
    [18,30,41,44,56], [18,32,34,36,55], [19,21,22,44,58], [19,27,34,41,42],
    [19,29,45,51,65], [19,30,37,57,62], [19,36,38,46,60], [19,40,47,48,61],
    [20,21,30,47,63], [20,23,40,54,56], [20,27,43,44,65], [20,29,37,41,55],
    [20,32,60,61,64], [20,33,46,49,62], [21,24,45,46,54], [21,29,34,52,59],
    [21,39,48,51,55], [22,43,46,55,59], [23,38,45,53,64], [23,41,46,58,61],
    [23,47,49,51,59], [24,34,39,58,62], [25,27,46,50,52], [25,32,53,59,62],
    [25,38,56,57,61], [26,40,44,57,59], [26,53,54,63,65], [28,29,42,48,56],
    [28,31,43,52,60], [28,51,57,58,64], [29,32,35,49,54], [30,31,35,46,48],
    [30,36,50,59,65], [31,34,51,53,61], [31,37,54,59,64], [32,41,43,48,57],
    [33,34,44,47,60], [33,37,52,56,65], [39,45,57,60,63], [42,47,52,62,64]
];;

u:=AbstractUnitalByDesignBlocks(bls);
t:=BlocksOfUnital(u){[1,33,200]};
StructureDescription(PerspectivityGroupOfUnitalsBlocks(u,t[1],t[2],t[3]));
\end{lstlisting}

\end{document}